\newtheorem{theorem}{Theorem}[section]
\newtheorem{proposition}[theorem]{Proposition}
\theoremstyle{definition}
\newtheorem{definition}[theorem]{Definition}
\theoremstyle{remark}
\numberwithin{equation}{section}
\providecommand{\bysame}{\leavevmode\hbox to3em{\hrulefill}\thinspace}
\def\DJ{{\hbox{D\kern-.8em\raise.15ex\hbox{--}\kern.35em}}}
\def\DJo{$\;$\kern-.4em
    \hbox{D\kern-.8em\raise.15ex\hbox{--}\kern.35em okovi\'{c}}}
\def\Gr{{ G_{\rm BS} }}
\def\al{{\alpha}}
\def\sig{{\sigma}}
\def\bZ{{\mbox{\bf Z}}}
\def\pE{{\mathcal E}}
\renewcommand{\subjclassname}{\textup{2000} Mathematics Subject
Classification }
\begin{document}

\title[Classification of base sequences $BS(n+1,n)$]
{Classification of base sequences $BS(n+1,n)$}

\author[D.\v{Z}. \DJ okovi\'{c}]
{Dragomir \v{Z}. \DJ okovi\'{c}}

\address{Department of Pure Mathematics, University of Waterloo,
Waterloo, Ontario, N2L 3G1, Canada}

\email{djokovic@uwaterloo.ca}

%% \thanks{\NSERC}

\keywords{Base sequences, nonperiodic autocorrelation functions, 
canonical form}

\date{}

\begin{abstract}
Base sequences $BS(n+1,n)$ are quadruples of $\{\pm1\}$-sequences
$(A;B;C;D)$, with $A$ and $B$ of length $n+1$ and $C$ and $D$ of 
length $n$, such that the sum of their nonperiodic autocorrelation 
functions is a $\delta$-function. The base sequence conjecture, 
asserting that $BS(n+1,n)$ exist for all $n$, is stronger than 
the famous Hadamard matrix conjecture.
We introduce a new definition of equivalence for base 
sequences $BS(n+1,n)$, and construct a canonical form.
By using this canonical form, we have enumerated the equivalence classes of $BS(n+1,n)$ for $n \le 30$. As the number of equivalence 
classes grows rapidly (but not monotonically) with $n$, 
the tables in the paper cover only the cases $n\le13$.
\end{abstract}

\maketitle
\subjclassname{ 05B20, 05B30 }
\vskip5mm

\section{Introduction} \label{Uvod}

Base sequences $BS(m,n)$ are quadruples $(A;B;C;D)$ of binary sequences, with $A$ and $B$ of length $m$ and
$C$ and $D$ of length $n$, such that the sum of
their nonperiodic autocorrelation functions is a 
$\delta$-function. In this paper we take $m=n+1$.

Sporadic examples of base sequences $BS(n+1,n)$ have been
constructed by many authors during the last 30 years, see e.g. 
\cite{HCD,KS,KKS,Y} and the survey paper \cite{SY} and
its references. A more systematic approach has been taken by 
the author in \cite{DZ2,DZ4}. The $BS(n+1,n)$ are presently known 
to exist for all $n\le38$ ({\em ibid}) and for Golay numbers 
$n=2^a10^b26^c$, where $a,b,c$ are arbitrary nonnegative integers. 
However the genuine classification of $BS(n+1,n)$ is still lacking.
Due to the important role that these sequences play in various
combinatorial constructions such as that for $T$-sequences,
orthogonal designs, and Hadamard matrices \cite{HCD,SY,KSY},
it is of interest to classify the base sequences of
small length. Our main goal is to provide such classification
for $n\le30$.

In section \ref{BazNiz} we recall  the basic properties of base 
sequences $BS(n+1,n)$. We also recall the quad decomposition 
and our encoding scheme for this particular type of base sequences. 

In section \ref{Ekv} we enlarge the collection of standard 
elementary transformations of $BS(n+1,n)$ by introducing a new 
one. Thus we obtain new notion of equivalence and
equivalence classes. Throughout the paper, the words
``equivalence" and ``equivalence class" are used in this new 
sense. We also introduce the canonical form for base sequences. 
By using it, we are able to compute the representatives of the 
equivalence classes. 

In section \ref{Grupa} we introduce an abstract group, $\Gr$, 
of order $2^{12}$ which acts naturally on all sets $BS(n+1,n)$. 
Its definition depends on the parity of $n$. The orbits of this 
group are just the equivalence classes of $BS(n+1,n)$.

In section \ref{Tablice} we tabulate some of the results of our
computations (those for $n\le13$) giving the list of representatives 
of the equivalence classes of $BS(n+1,n)$. The representatives are 
written in the encoded form which is explained in the next section.
For $n\le8$ we also include the values of the nonperiodic
autocorrelation functions of the four constituent sequences.
We also raise the question of characterizing the binary 
sequences having the same nonperiodic autocorrelation function.
A class of examples is constructed, showing that the question
is interesting.

The column ``Equ'' in Table 1 gives the number of equivalence classes in $BS(n+1,n)$ for $n\le30$. 
The column ``Nor'' gives the number of normal equivalence classes 
(see Section \ref{Tablice} for their definition).

\begin{center}
Table 1: Number of equivalence classes of $BS(n+1,n)$
\begin{tabular}{rrrcrrr} \\ \hline 
\multicolumn{1}{c}{$n$} & \multicolumn{1}{c}{Equ.} 
& \multicolumn{1}{c}{Nor.} & \multicolumn{1}{c}{\quad\quad\quad} 
& \multicolumn{1}{c}{$n$} & \multicolumn{1}{c}{Equ.} 
& \multicolumn{1}{c}{Nor.} \\ \hline
0 & 1 & 1 && 16 & 1721 & 104 \\
1 & 1 & 1 && 17 & 2241 & 0 \\
2 & 1 & 1 && 18 & 1731 & 2 \\
3 & 1 & 1 && 19 & 4552 & 2 \\
4 & 3 & 2 && 20 & 3442 & 72 \\
5 & 4 & 1 && 21 & 3677 & 0 \\
6 & 5 & 0 && 22 & 15886 & 0 \\
7 & 17 & 6 && 23 & 6139 & 0 \\
8 & 27 & 14 && 24 & 10878 & 0 \\
9 & 44 & 4 && 25 & 19516 & 4 \\
10 & 98 & 10 && 26 & 10626 & 4 \\
11 & 84 & 3 && 27 & 22895 & 0 \\
12 & 175 & 8 && 28 & 31070 & 0 \\
13 & 475 & 5 && 29 & 18831 & 2 \\
14 & 331 & 0 && 30 & 19640 & 0 \\
15 & 491 & 2 && 31 & ? & 0 \\
\hline
\end{tabular} \\
\end{center}

\section{Quad decomposition and the encoding scheme} \label{BazNiz}

We denote finite sequences of integers by capital letters. 
If, say, $A$ is such a sequence of length $n$ then we denote 
its elements by the corresponding lower case letters. Thus 
$$ A=a_1,a_2,\ldots,a_n. $$
To this sequence we associate the polynomial
$$ A(x)=a_1+a_2x+\cdots+a_nx^{n-1} , $$
viewed as an element of the Laurent polynomial ring 
$\bZ[x,x^{-1}]$. (As usual, $\bZ$ denotes the ring of integers.)
The {\em nonperiodic autocorrelation function} $N_A$ of $A$ is 
defined by:
$$ N_A(i)=\sum_{j\in\bZ} a_ja_{i+j},\quad i\in\bZ, $$
where $a_k=0$ for $k<1$ and for $k>n$. Note that
$N_A(-i)=N_A(i)$ for all $i\in\bZ$ and $N_A(i)=0$ for $i\ge n$.
The {\em norm} of $A$ is the Laurent polynomial 
$N(A)=A(x)A(x^{-1})$. We have
$$ N(A)=\sum_{i\in\bZ} N_A(i) x^i . $$
The negation, $-A$, of $A$ is the sequence
$$ -A=-a_1,-a_2,\ldots,-a_n. $$
The {\em reversed} sequence $A'$ and the {\em alternated} sequence
$A^*$ of the sequence $A$ are defined by
\begin{eqnarray*}
A' &=& a_n,a_{n-1},\ldots,a_1 \\
A^* &=& a_1,-a_2,a_3,-a_4,\ldots,(-1)^{n-1}a_n.
\end{eqnarray*}
Observe that $N(-A)=N(A')=N(A)$ and $N_{A^*}(i)=(-1)^i N_A(i)$
for all $i\in\bZ$. By $A,B$ we denote the concatenation of the 
sequences $A$ and $B$.

A {\em binary sequence} is a sequence whose terms belong to 
$\{\pm1\}$. When displaying such sequences, we shall often
write $+$ for $+1$ and $-$ for $-1$.
The {\em base sequences} consist of four binary sequences 
$(A;B;C;D)$, with $A$ and $B$ of length $m$ 
and $C$ and $D$ of length $n$, such that 
$$ N(A)+N(B)+N(C)+N(D)=2(m+n). $$
Thus, for $i\ne0$, we have
\begin{equation} \label{KorNula}
N_A(i)+N_B(i)+N_C(i)+N_D(i)=0.
\end{equation}

We denote by $BS(m,n)$ the set of such base sequences with
$m$ and $n$ fixed. From now on we shall consider only
the case $m=n+1$.

Let $(A;B;C;D) \in BS(n+1,n)$. For convenience we fix the 
following notation. For $n$ even (odd) we set $n=2m$ ($n=2m+1$). 
We decompose the pair $(A;B)$ into quads
$$ \left[ \begin{array}{ll} a_i & a_{n+2-i} \\ 
b_i & b_{n+2-i} \end{array} \right],\quad i=1,2,\ldots,
\left[ \frac{n+1}{2} \right], $$ 
and, if $n$ is even, the central column
$ \left[ \begin{array}{l} a_{m+1} \\ b_{m+1} \end{array} \right]. $
Similar decomposition is valid for the pair $(C;D)$.

Recall the following basic and well-known property
\cite[Theorem 1]{KKS}.
\begin{theorem} \label{KKS-teorema}
For $(A;B;C;D)\in BS(n+1,n)$, the sum of the four quad entries is $2 \pmod{4}$ for the first quad of the pair $(A;B)$ and is $0 \pmod{4}$ for all other quads of $(A;B)$ and also for all quads of the pair $(C;D)$.
\end{theorem}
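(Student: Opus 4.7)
My plan is to reduce the claim to a parity statement about the number of $-1$'s in each quad and then to extract this parity from the basic identity (\ref{KorNula}) by a mod-$4$ calculation on the generating polynomials. Note first that four $\pm 1$'s sum to $\equiv 2 \pmod 4$ exactly when the number of $-1$'s among them is odd, i.e., when their product equals $-1$. The case of the first quad of $(A;B)$ is easy: setting $i = n$ in (\ref{KorNula}) kills the $C$ and $D$ contributions (their sequences have length $n$), leaving $a_1 a_{n+1} + b_1 b_{n+1} = 0$ and hence $a_1 a_{n+1} b_1 b_{n+1} = -1$.

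For all other quads I would use the following computation. Write $a_i = 1 - 2\varepsilon_i^A$ with $\varepsilon_i^A \in \{0,1\}$, and set $E_m(x) = 1 + x + \cdots + x^{m-1}$ and $\varepsilon_A(x) = \sum_i \varepsilon_i^A x^{i-1}$. Expanding $N(A)(x) = A(x) A(x^{-1})$ modulo $4$, summing over $A,B,C,D$, using $\sum_X N(X)(x) = 2(2n+1)$, and dividing by $2$ yields
\begin{equation*}
\Psi_{AB}(x) + \Psi_{CD}(x) \equiv \sum_{1 \le |\ell| \le n} x^\ell \pmod{2},
\end{equation*}
where $\Psi_{AB}(x) = E_{n+1}(x) f(x^{-1}) + f(x) E_{n+1}(x^{-1})$ with $f = \varepsilon_A + \varepsilon_B$, and $\Psi_{CD}$ is defined analogously from $E_n$ and $g = \varepsilon_C + \varepsilon_D$. (On the right-hand side one uses that $E_{n+1}(x)E_{n+1}(x^{-1}) + E_n(x)E_n(x^{-1}) \equiv \sum_{|\ell|\le n} x^\ell \pmod 2$ since $(n+1-|\ell|)+(n-|\ell|) = 2n+1-2|\ell|$ is odd.)

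The crux is then to separate the $(A;B)$ and $(C;D)$ contributions. A direct coefficient computation in $\mathbb{F}_2$ reveals two depth-symmetries: $[x^\ell]\Psi_{AB} \equiv [x^{n+1-\ell}]\Psi_{AB}$ and $[x^\ell]\Psi_{CD} \equiv [x^{n-\ell}]\Psi_{CD}$ (each follows because a doubled sum cancels mod $2$). Comparing the identity at $\ell$ and $n+1-\ell$ (both in $[1,n]$) and applying the $AB$-symmetry forces $[x^\ell]\Psi_{CD} \equiv [x^{n+1-\ell}]\Psi_{CD}$; combined with the $CD$-symmetry this makes $[x^m]\Psi_{CD}$ constant in $m$ on $[0,n-1]$, and since $[x^0]\Psi_{CD} \equiv 0$ (the two summands coincide mod $2$), we conclude $[x^\ell]\Psi_{CD} \equiv 0$ and hence $[x^\ell]\Psi_{AB} \equiv 1$ for $1 \le \ell \le n$.

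To finish, one checks that $[x^\ell]\Psi_{AB} \equiv \sum_{j=1}^{\min(\ell,\,n+1-\ell)} W_j^{AB} \pmod 2$, where $W_j^{AB}$ denotes the parity of the number of $-1$'s in the $j$-th quad of $(A;B)$; setting $\ell = 1, 2, \ldots$ in turn gives $W_1^{AB} \equiv 1$ and $W_j^{AB} \equiv 0$ for $j \ge 2$. The analogous unwinding of $[x^\ell]\Psi_{CD} \equiv 0$ yields $W_j^{CD} \equiv 0$ for every $j$. The main obstacle, as I see it, is precisely the separation step: (\ref{KorNula}) only constrains the sum of the $(A;B)$ and $(C;D)$ contributions, so they cannot be analyzed in isolation; the trick is to exploit the length mismatch $n+1$ versus $n$, which manifests as the shift by $1$ between the two depth-symmetries and ultimately forces the $(C;D)$ part to vanish modulo $2$.
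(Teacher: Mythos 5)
Your proof is correct, but there is no internal proof to compare it against: the paper states this theorem as a known result, citing \cite[Theorem 1]{KKS}, and gives no argument of its own. Taken on its own terms, your derivation checks out: the reduction to the parity of the number of $-1$'s in each quad, the first-quad case via $i=n$ in (\ref{KorNula}), the mod-$2$ identity for $\Psi_{AB}+\Psi_{CD}$, the two depth-symmetries, the constancy-plus-vanishing argument that kills $\Psi_{CD}$, and the final unwinding $[x^{\ell}]\Psi_{AB}\equiv\sum_{j\le\min(\ell,n+1-\ell)}W_j^{AB}$ are all sound, and the ranges of $\ell$ you need stay inside the intervals where the symmetries are valid. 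For comparison, the classical elementary route rests on exactly the same length-mismatch mechanism but avoids generating functions: for $1\le k\le n$ the left-hand side of (\ref{KorNula}) is a sum of $2(n+1-k)+2(n-k)=4n+2-4k$ terms, each $\pm1$, totalling $0$, so exactly $2n+1-2k$ (an odd number) of them equal $-1$; multiplying all terms and telescoping the products gives
\[
\prod_{j=1}^{\min(k,\,n+1-k)}\pi_j^{AB}\;\prod_{j=1}^{\min(k,\,n-k)}\pi_j^{CD}=-1,
\]
where $\pi_j$ denotes the product of the four entries of the $j$-th quad. Running $k$ alternately from the two ends ($k=n$ gives $\pi_1^{AB}=-1$; then $k=1$ gives $\pi_1^{CD}=+1$; then $k=n-1$ gives $\pi_2^{AB}=+1$; then $k=2$ gives $\pi_2^{CD}=+1$; and so on) separates the $(A;B)$ and $(C;D)$ families -- this zig-zag is your ``shift by one between the two depth-symmetries'' in multiplicative form. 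What your version buys is a single clean congruence from which all quad parities are read off at once; what the counting version buys is brevity and no machinery beyond (\ref{KorNula}) itself.
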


Thus there are 8 possibilities for the first quad of the
pair $(A;B)$:
\begin{center}
\begin{eqnarray*}
1'=\left[ \begin{array}{ll} - & + \\ + & + \end{array} \right],\quad 
2'=\left[ \begin{array}{ll} + & - \\ + & + \end{array} \right],\quad 
3'=\left[ \begin{array}{ll} + & + \\ + & - \end{array} \right],\quad 
4'=\left[ \begin{array}{ll} + & + \\ - & + \end{array} \right], \\
5'=\left[ \begin{array}{ll} + & - \\ - & - \end{array} \right],\quad 
6'=\left[ \begin{array}{ll} - & + \\ - & - \end{array} \right],\quad 
7'=\left[ \begin{array}{ll} - & - \\ - & + \end{array} \right],\quad 
8'=\left[ \begin{array}{ll} - & - \\ + & - \end{array} \right].
\end{eqnarray*}
\end{center}
These eight quads occur in the study of Golay sequences
(see e.g. \cite{DZ1}) and we refer to them as the {\em Golay quads}.

There are also 8 possibilities for each of the remaining quads 
of $(A;B)$ and all quads of $(C;D)$:
\begin{center}
\begin{eqnarray*}
1=\left[ \begin{array}{ll} + & + \\ + & + \end{array} \right],\quad 
2=\left[ \begin{array}{ll} + & + \\ - & - \end{array} \right],\quad 
3=\left[ \begin{array}{ll} - & + \\ - & + \end{array} \right],\quad 
4=\left[ \begin{array}{ll} + & - \\ - & + \end{array} \right], \\
5=\left[ \begin{array}{ll} - & + \\ + & - \end{array} \right],\quad 
6=\left[ \begin{array}{ll} + & - \\ + & - \end{array} \right],\quad 
7=\left[ \begin{array}{ll} - & - \\ + & + \end{array} \right],\quad 
8=\left[ \begin{array}{ll} - & - \\ - & - \end{array} \right].
\end{eqnarray*}
\end{center}
We shall refer to these eight quads as the {\em BS-quads}.
We say that a BS-quad is {\em symmetric} if its two columns are
the same, and otherwise we say that it is {\em skew}. The 
quads $1,2,7,8$ are symmetric and $3,4,5,6$ are skew. We say that
two quads have the {\em same symmetry type} if they are both
symmetric or both skew.

There are 4 possibilities for the central column:
$$
0=\left[ \begin{array}{l} + \\ + \end{array} \right],\quad
1=\left[ \begin{array}{l} + \\ - \end{array} \right],\quad
2=\left[ \begin{array}{l} - \\ + \end{array} \right],\quad
3=\left[ \begin{array}{l} - \\ - \end{array} \right].
$$

We encode the pair $(A;B)$ by the
symbol sequence
\begin{equation} \label{simb-p}
p_1p_2 \ldots p_m p_{m+1},
\end{equation}
where $p_i$ is the label of the $i$th quad except in the case 
where $n$ is even and $i=m+1$ in which case $p_{m+1}$ is the
label of the central column. 

Similarly, we encode the pair $(C;D)$ by the symbol sequence
\begin{equation} \label{simb-q}
q_1q_2 \ldots q_m \quad  \text{respectively} 
\quad q_1q_2 \ldots q_m q_{m+1}
\end{equation}
when $n$ is even respectively odd. Here $q_i$ is the label of the 
$i$th quad for $i\le m$ and $q_{m+1}$ is the label of the central 
column (when $n$ is odd).

\section{The equivalence relation} \label{Ekv}

We start by defining five types of elementary transformations 
of base sequences $BS(n+1,n)$. These elementary transformations 
include the standard ones, as described in \cite{KKS} and 
\cite{DZ2}. But we also  introduce one additional elementary 
transformation, see item (T4) below, which made its first 
appearance in \cite{DZ3} in the context of near-normal 
sequences. The quad notation was instrumental in the discovery
of this new elementary operation.

The {\em elementary transformations} of $(A;B;C;D)\in BS(n+1,n)$ 
are the following:

(T1) Negate one of the sequences $A;B;C;D$.

(T2) Reverse one of the sequences $A;B;C;D$.

(T3) Interchange the sequences $A;B$ or $C;D$.

(T4) Replace the pair $(C;D)$ with the pair 
$(\tilde{C};\tilde{D})$ which is defined as follows:
If (\ref{simb-q}) is the encoding of $(C;D)$, then 
the encoding of $(\tilde{C};\tilde{D})$ is 
$\tau(q_1)\tau(q_2)\cdots\tau(q_m) q_{m+1}$
or $\tau(q_1)\tau(q_2)\cdots\tau(q_m)$
depending on whether $n$ is even or odd,
where $\tau$ is the transposition $(45)$.
In other words, the encoding of $(\tilde{C};\tilde{D})$ is 
obtained from that of $(C;D)$ by replacing each quad symbol 4 
with the symbol 5, and vice versa. (We verify below that $N_{\tilde{C}}+N_{\tilde{D}}=N_C+N_D$.)

(T5) Alternate all four sequences $A;B;C;D$.

In order to justify (T4) one has to verify that  $N_{\tilde{C}}+N_{\tilde{D}}=N_C+N_D$. For that purpose let us fix two quads $q_k$ and $q_{k+i}$ and consider their contribution $\delta_i$ to $N_C(i)+N_D(i)$. We claim that $\delta_i$ is equal to the contribution $\tilde{\delta}_i$ of $\tau(q_k)$ and $\tau(q_{k+i})$ to $N_{\tilde{C}}(i)+N_{\tilde{D}}(i)$. If neither $q_k$ nor $q_{k+i}$ belongs to $\{4,5\}$, then $\tau(q_k)=q_k$ and $\tau(q_{k+i})=q_{k+i}$ and so $\delta_i=\tilde{\delta}_i$. If $q\in\{4,5\}$ then $\tau(q)$ is the negation of $q$. Hence if $\{q_k,q_{k+i}\}\subseteq\{4,5\}$ then again $\delta_i=\tilde{\delta}_i$. Otherwise, say $q_k\in\{4,5\}$ while $q_{k+i}\notin\{4,5\}$, and it is easy to verify that $\delta_i=0=\tilde{\delta}_i$. The pairs $q_k$ and $q_{n+1-k-i}$ also make a contribution to $N_C(i)+N_D(i)$, but they can be treated in the same manner. Finally, if $n$ is odd then the pair $(C;D)$ also has a central column with label $q_{m+1}$. In that case, if $k=m+1-i$ and $q_k\in\{4,5\}$, the contribution of $q_k$ and $q_{m+1}$ to $N_C(i)+N_D(i)$ is 0. This completes the verification.

We say that two members of $BS(n+1,n)$ are {\em equivalent} if 
one can be transformed to the other by applying a finite 
sequence of elementary transformations. 
One can enumerate the equivalence classes by finding suitable
representatives of the classes.
For that purpose we introduce the canonical form.

\begin{definition} \label{KanFor}
Let $S=(A;B;C;D) \in BS(n+1,n)$ and let (\ref{simb-p})
respectively (\ref{simb-q}) be the encoding of the pair
$(A;B)$ respectively $(C;D)$. We say that $S$ is in the 
{\em canonical form} if the following eleven conditions hold:

(i) $p_1=3'$, $p_2\in\{6,8\}$ for $n$ even and $p_2\in\{1,6\}$ 
for $n$ odd.

(ii) The first symmetric quad (if any) of $(A;B)$ is 1 or 8.

(iii) If $n$ is even and $p_i\in\{3,4,5,6\}$ for $2\le i\le m$ 
then $p_{m+1}\in\{0,3\}$.

(iv) The first skew quad (if any) of $(A;B)$ is 3 or 6.

(v) $q_1=1$ for $n$ even and $q_1\in\{1,6\}$ for $n$ odd.

(vi) The first symmetric quad (if any) of $(C;D)$ is 1.

(vii) The first skew quad (if any) of $(C;D)$ is 6.

(viii) If $i$ is the least index such that $q_i\in\{2,7\}$ 
then $q_i=2$.

(ix) If $i$ is the least index such that 
$q_i\in\{4,5\}$ then $q_i=4$.

(x)  If $n$ is odd and $q_i \in \{1,3,6,8\}$, 
$\forall i\le m$, then $q_{m+1}\ne2$.

(xi)  If $n$ is odd and $q_i \in \{3,4,5,6\}$, 
$\forall i\le m$, then $q_{m+1}=0$.
\end{definition}

We can now prove that each equivalence class has a member 
which is in the canonical form. The uniqueness of this member 
will be proved in the next section.

\begin{proposition} \label{Klase}
Each equivalence class $\pE\subseteq BS(n+1,n)$ has at least 
one member having the canonical form.
\end{proposition}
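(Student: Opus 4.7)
The plan is to start from an arbitrary $S = (A;B;C;D)\in\pE$ and produce a member of $\pE$ in canonical form by applying a sequence of elementary transformations that enforces the eleven conditions (i)--(xi) one at a time, arguing at each stage that the remaining transformations give sufficient freedom to secure the next condition without disturbing those already in place.

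First I would normalize the pair $(A;B)$ using the transformations T1, T2, T3 applied to $A$ or $B$ (none of which touch $(C;D)$) together with T5. A tabulation of how each generator acts on the eight Golay quads and on the eight BS-quads shows that the subgroup they generate is transitive on the Golay quads, so $p_1$ can be brought to $3'$. Within the stabilizer of $3'$ --- combined with T5, which for $n$ even fixes the first quad and negates each remaining quad in a checkerboard pattern, and for $n$ odd flips individual columns of each quad with signs depending on the quad index --- enough residual freedom remains to arrange the value of $p_2$ required by (i), the first symmetric quad of $(A;B)$ to be $1$ or $8$ as in (ii), the first skew quad to be $3$ or $6$ as in (iv), and, when the hypothesis of (iii) applies, the central column $p_{m+1}$ to lie in $\{0,3\}$.

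Next I would normalize the pair $(C;D)$ using T1 and T2 on $C$ or $D$, T3 on $(C;D)$, and T4. None of these affect $(A;B)$, so the previous normalization is preserved. They act on each BS-quad of $(C;D)$ by permutations which preserve the partition into symmetric quads $\{1,2,7,8\}$ and skew quads $\{3,4,5,6\}$ and act transitively on each block; the new operation T4 supplies the essential transposition $4\leftrightarrow 5$ inside the skew block. This enables enforcement of (v)--(ix) one at a time, each step justified by a direct check that the subgroup fixing the already-normalized quads still acts transitively on the positions not yet normalized. Conditions (x) and (xi) concern the central column for $n$ odd. For (x), T3 on $(C;D)$ exchanges the central-column values $1\leftrightarrow 2$ while fixing the set $\{1,3,6,8\}$ at every non-central position, so it can be applied without damaging earlier conditions; for (xi), the subgroup generated by T2 on $C$ or $D$, T3, and T4 preserves the central column value $0$ and acts transitively on $\{3,4,5,6\}$, which is enough to set the central column to $0$ and then restore the non-central normalizations.

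The main obstacle will be the bookkeeping: at each stage one must verify that the residual symmetry subgroup (the joint stabilizer of all previously enforced conditions) is still large enough to impose the next condition. This reduces to a finite case analysis, complicated chiefly by the parity of $n$, which dictates where central columns appear and, more subtly, changes the way T5 acts on individual quads. No single step is deep, but the verifications have to be carried through systematically.
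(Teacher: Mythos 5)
Your overall plan---enforce the conditions of Definition \ref{KanFor} one at a time by elementary transformations, checking at each step that the previously secured conditions survive---is the same as the paper's, and your treatment of (v)--(x) is essentially the paper's argument. But your treatment of condition (i) has a genuine gap: securing the required value of $p_2$ is \emph{not} a matter of residual group freedom, and the claim fails as stated. Concretely, take $n$ even, $p_1=3'$ and $q_1=1$ (condition (v)), and suppose $p_2=1$. Write any $g\in\Gr$ as $g=\al^s h$ with $h\in\tilde{H}$. If $s=1$, then since $h$ preserves the symmetry type of every quad while $\al$ (for $n$ even) flips the symmetry type of $q_1$, the sequence $gS$ has a skew first quad in $(C;D)$, violating (v). If $s=0$, then the $H_1$-component of $h$ must fix $p_1=3'$, so it equals $\rho_1^e(\nu_2\rho_2)^f$ (reverse $A$, negate-and-reverse $B$), and these send the quad $1$ at position $2$ only into $\{1,2\}$, never to $8$. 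So no element of $\Gr$ carries the configuration $(p_1,p_2,q_1)=(3',1,1)$ to one with $p_2=8$ and $q_1=1$: the ``residual freedom'' you invoke simply is not there. What rescues the construction---and what your proposal never uses---is the defining property of base sequences, equation (\ref{KorNula}) at the shift $i=n-1$, to which only the quads $p_1$, $p_2$ and $q_1$ contribute. It rules out the bad configuration outright: with $p_1=3'$, $p_2=1$, $q_1=1$ one computes $N_A(n-1)+N_B(n-1)+N_C(n-1)+N_D(n-1)=2+0+2=4\ne0$, so this case never occurs in $BS(n+1,n)$; whenever $p_2$ is symmetric and $q_1=1$, automatically $p_2=8$, and likewise a skew $p_2=3$ forces $q_1=6$ (hence $n$ odd), after which T5 yields $p_2=1$. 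This arithmetic constraint, not transitivity of a stabilizer, is why (i) can be enforced and why its target set ($\{6,8\}$ for $n$ even versus $\{1,6\}$ for $n$ odd) depends on parity. Any proof that never appeals to $N_A+N_B+N_C+N_D=0$ cannot succeed, because the canonical form is unattainable in the $\Gr$-orbit of an unconstrained quad sequence of the above type.

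A second, smaller error concerns (xi). Your toolkit there---T2 on $C$ or $D$, T3, and T4---fixes the central column except for swapping the values $1\leftrightarrow2$ (reversal fixes the middle entry of an odd-length sequence, and T4 does not touch the central column), so it cannot move $q_{m+1}=3$ to $0$. The transformation actually needed is negation combined with reversal, $C\mapsto -C'$ and/or $D\mapsto -D'$: these flip the central entries while fixing every skew quad, which is precisely why condition (xi) carries the hypothesis that all $q_i$ with $i\le m$ are skew. (The same device, $B\mapsto -B'$, is what the paper uses to secure (ii) and (iii).)
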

\begin{proof}
Let $S=(A;B;C;D)\in\pE$ be arbitrary and let (\ref{simb-p}) 
respectively (\ref{simb-q}) be the encoding of $(A;B)$ 
respectively $(C;D)$. By applying the first three types of 
elementary transformations and by Theorem \ref{KKS-teorema} we can assume that $p_1=3'$ and $c_1=d_1=+1$. By Theorem \ref{KKS-teorema}, $q_1\in\{1,6\}$. If $n$ is even and $q_1=6$ we apply the elementary transformation (T5). Thus we may assume that $p_1=3'$ and that the condition (v) for the canonical form is satisfied. To satisfy the conditions (ii) and (iii), replace $B$ with $-B'$ (if necessary). To satisfy the condition (iv), replace $A$ with $A'$ (if necessary). 

We now modify $S$ in order to satisfy the second part of condition (i). Note that $p_2$ is a BS-quad by Theorem \ref{KKS-teorema}. If $p_2=6$ there is nothing to do.

Assume that the quad $p_2$ is symmetric. By (ii), $p_2\in\{1,8\}$.
From equation (\ref{KorNula}) for $i=n-1$,
we deduce that $p_2=8$ if $q_1=1$ and $p_2=1$ if $q_1=6$. Note that if 
$n$ is even, then $q_1=1$ by (v). If $n$ is odd and $q_1=1$, we switch 
$A$ and $B$ and apply the elementary transformation (T5). After this 
change we still have $p_1=3'$, $q_1=1$, the conditions (ii), 
(iii) and (iv) remain satisfied, and moreover $p_2=6$.

Now assume that $p_2$ is skew. In view of (iv), we may assume that 
$p_2=3$. Then the argument above based on the equation 
(\ref{KorNula}) shows that $q_1=6$, and so $n$ must be odd. After 
applying the elementary transformation (T5), we obtain that $p_2=1$. 
Hence the condition (i) is fully satisfied.

To satisfy (vi), in view of (v) we may assume that $n$ is odd and
$q_1=6$. If the first symmetric quad in $(C;D)$ is 2 respectively 7,
we reverse and negate $C$ respectively $D$. If it is 8, we reverse and negate both $C$ and $D$. Now the first symmetric quad will be 1.

To satisfy (vii), (if necessary) reverse $C$ or $D$, or both. To satisfy (viii), (if necessary) interchange $C$ and $D$. Note that in this process we do not violate the previously established  properties. To satisfy (ix), (if necessary) apply the elementary transformation (T4). To satisfy (x), switch $C$ and $D$ (if necessary). To satisfy (xi), (if necessary) replace $C$ with $-C'$ or $D$ with $-D'$, or both.

Hence $S$ is now in the canonical form.

\end{proof}

\section{The symmetry group of $BS(n+1,n)$} \label{Grupa}

We shall construct a group $\Gr$ of order $2^{12}$ which acts on
$BS(n+1,n)$. Our (redundant) generating set for 
$\Gr$ will consist of 12 involutions. Each of these generators 
is an elementary transformation, and we use 
this information to construct $\Gr$, i.e., to impose the defining 
relations. We denote by $S=(A;B;C;D)$ an aritrary member of 
$BS(n+1,n)$.

To construct $\Gr$, we start with an elementary abelian group $E$ 
of order $2^8$ with generators $\nu_i,\rho_i$, $i\in\{1,2,3,4\}$.
It acts on $BS(n+1,n)$ as follows:
\begin{eqnarray*}
&& \nu_1S=(-A;B;C;D),\quad \rho_1S=(A';B;C;D), \\
&& \nu_2S=(A;-B;C;D),\quad \rho_2S=(A;B';C;D), \\
&& \nu_3S=(A;B;-C;D),\quad \rho_3S=(A;B;C';D), \\
&& \nu_4S=(A;B;C;-D),\quad \rho_4S=(A;B;C;D'),
\end{eqnarray*}
i.e., $\nu_i$ negates the $i$th sequence of $S$ and $\rho_i$ 
reverses it.

Next we introduce two commuting involutory generators $\sig_1$ 
and $\sig_2$. We declare that $\sig_1$ commutes with
$\nu_3,\nu_4,\rho_3,\rho_4$, and $\sig_2$ commutes with
$\nu_1,\nu_2,\rho_1,\rho_2$, and that
$$
\sig_1\nu_1=\nu_2\sig_1,\quad
\sig_1\rho_1=\rho_2\sig_1,\quad
\sig_2\nu_3=\nu_4\sig_2,\quad
\sig_2\rho_3=\rho_4\sig_2.
$$
The group $H=\langle E,\sig_1,\sig_2 \rangle$ is 
the direct product of two isomorphic groups of order 32:
$$ H_1= \langle \nu_1,\rho_1,\sig_1 \rangle, \quad
\text{and} \quad H_2=\langle \nu_3,\rho_3,\sig_2 \rangle . $$
The action of $E$ on $BS(n+1,n)$ extends to $H$ by defining:
$$ \sig_1S=(B;A;C;D),\quad \sig_2S=(A;B;D;C). $$

We add a new generator $\theta$ which commutes elementwise with $H_1$, commutes with $\nu_3\rho_3,\nu_4\rho_4$ and $\sig_2$, and satisfies
$\theta\rho_3=\rho_4\theta$. Let us denote this enlarged group 
by $\tilde{H}$. It has the direct product decomposition 
$$ \tilde{H}=\langle H,\theta \rangle = H_1 \times \tilde{H}_2,$$
where the second factor is itself direct product of two 
copies of the dihedral group $D_8$ of order 8:
$$ \tilde{H}_2=\langle \rho_3,\rho_4,\theta \rangle \times
\langle \nu_3\rho_3,\nu_4\rho_4,\theta\sig_2 \rangle. $$
The action of $H$ on $BS(n+1,n)$ extends to $\tilde{H}$ by 
letting $\theta$ act as the elementary transformation (T4).

Finally, we define $\Gr$ as the semidirect product of 
$\tilde{H}$ and the group of order 2 with generator $\al$. 
By definition, $\al$ commutes with each $\nu_i$ and satisfies:
\begin{eqnarray*}
&& \al\rho_i\al=\rho_i\nu_i^n,\ i=1,2; \\
&& \al\rho_j\al=\rho_j\nu_j^{n-1},\ j=3,4; \\
&& \al\theta\al=\theta\sig_2^{n-1}.
\end{eqnarray*}
The action of $ \tilde{H}$ on $BS(n+1,n)$ extends to $\Gr$ by 
letting $\al$ act as the elementary transformation (T5), i.e., 
we have
$$ \al S=(A^*;B^*;C^*;D^*). $$ 

We point out that the definition of the subgroup $\tilde{H}$ is independent of $n$ and its action on $BS(n+1,n)$ has a quad-wise 
character. By this we mean that the value of a particular 
quad, say $p_i$, of $S\in BS(n+1,n)$ and $h\in\tilde{H}$ determine uniquely the quad $p_i$ of $hS$. In other words
$\tilde{H}$ acts on the Golay quads, the BS-quads and the
set of central columns such that the encoding of $hS$ is
given by the symbol sequences
$$ h(p_1)h(p_2)\ldots h(p_{m+1}) \quad \text{and} \quad
h(q_1)h(q_2)\ldots \, .$$
On the other hand the full group $\Gr$ has neither of these two 
properties.

An important feature of the action of $\tilde{H}$ on the
BS-quads is that it preserves the symmetry type of the quads.

The following proposition follows immediately from the construction of $\Gr$ and the description of its action on 
$BS(n+1,n)$.

\begin{proposition} \label{Orbite}
The orbits of $\Gr$ in $BS(n+1,n)$ are the same as the
equivalence classes.
\end{proposition}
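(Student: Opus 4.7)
The plan is to show the two inclusions \emph{orbit $\subseteq$ equivalence class} and \emph{equivalence class $\subseteq$ orbit} by reading them directly off the construction in Section \ref{Grupa}. Since an equivalence class is, by definition, the smallest subset of $BS(n+1,n)$ closed under the five elementary transformations (T1)--(T5), the proposition amounts to the single observation that the group $\Gr$ is generated by (transformations realising) precisely those operations.

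First I would check the easy direction, orbit $\subseteq$ equivalence class. Each of the twelve generators $\nu_i,\rho_i,\sig_1,\sig_2,\theta,\al$ was defined in Section \ref{Grupa} to act on $(A;B;C;D)$ as one of the elementary transformations: $\nu_i$ as (T1) on the $i$th sequence, $\rho_i$ as (T2), $\sig_1$ and $\sig_2$ as the two instances of (T3), $\theta$ as (T4), and $\al$ as (T5). Hence any element $g\in\Gr$, being a product of these generators, sends $S$ to a base sequence obtained from $S$ by a finite composition of elementary transformations. Therefore $gS$ lies in the same equivalence class as $S$.

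Conversely, for equivalence class $\subseteq$ orbit, suppose $S'$ can be obtained from $S$ by a finite sequence of elementary transformations $t_1,\ldots,t_r$. Every such $t_j$ is one of (T1)--(T5), and by construction each of these is implemented by a generator (or product of two generators) of $\Gr$: (T1) by some $\nu_i$, (T2) by some $\rho_i$, (T3) by $\sig_1$ or $\sig_2$, (T4) by $\theta$, and (T5) by $\al$. Thus there is an element $g\in\Gr$ with $gS=S'$, so $S$ and $S'$ lie in the same orbit.

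The only genuine content to verify is that the assignment of generators to elementary transformations actually extends to a well-defined action of the abstract group $\Gr$ on $BS(n+1,n)$. This is what needs most care, and it is the point most likely to hide a subtle mistake. For the subgroup $H$ the relations are obvious since reversal, negation and column swap commute in the indicated way on the sequence level. For $\theta$ one uses the verification already carried out after the definition of (T4), together with the fact that swapping labels $4\leftrightarrow5$ within a single pair commutes with negation, reversal and interchange of that pair in the prescribed manner. For the generator $\al$ the relations $\al\rho_i\al=\rho_i\nu_i^{n}$ (for $i=1,2$) and $\al\rho_j\al=\rho_j\nu_j^{n-1}$ (for $j=3,4$) follow from the elementary identity $(A^*)'=\pm(A')^*$, with sign depending on the parity of the length of $A$; and $\al\theta\al=\theta\sig_2^{n-1}$ follows because alternation turns the $(45)$-swap on the $(C;D)$-quads into the same swap composed with the interchange $C\leftrightarrow D$ whenever the affected quads sit at positions of opposite parity, which happens exactly when $n$ is even (so $n-1$ odd). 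Once these relation checks are in place, the two inclusions above combine to give Proposition~\ref{Orbite}.
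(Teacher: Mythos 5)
Your proof is correct and takes essentially the same approach as the paper, whose entire argument is the remark that the proposition ``follows immediately from the construction of $\Gr$ and the description of its action'': both amount to observing that the twelve generators $\nu_i,\rho_i,\sig_1,\sig_2,\theta,\al$ act precisely as the elementary transformations (T1)--(T5), and conversely every elementary transformation is realized by a generator. Your extra step of checking that the concrete transformations satisfy the defining relations (e.g. $\al\rho_i\al=\rho_i\nu_i^{n}$ via $(A^*)'=\pm(A')^*$, and $\al\theta\al=\theta\sig_2^{n-1}$) is accurate and simply makes explicit the well-definedness of the action that the paper assumes when it says the action ``extends'' to $\tilde{H}$ and to $\Gr$.
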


The main tool that we use to enumerate the equivalence classes of $BS(n+1,n)$ is the following theorem.

\begin{theorem} \label{Glavna}
For each equivalence class $\pE\subseteq BS(n+1,n)$ there is a 
unique $S=(A;B;C;D)\in\pE$ having the canonical form.
\end{theorem}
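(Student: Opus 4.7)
Existence is Proposition \ref{Klase}, so the task is to prove uniqueness. Suppose $S,S'\in BS(n+1,n)$ are both in canonical form and lie in a common equivalence class. By Proposition \ref{Orbite} there is $g\in\Gr$ with $S'=gS$, and we must show $gS=S$.

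The plan is to exploit the coset decomposition $\Gr=\tilde{H}\sqcup\al\tilde{H}$ together with the structural features emphasised at the end of Section \ref{Grupa}: $\tilde{H}$ acts on the quad encoding position-wise and preserves the symmetry type of BS-quads, whereas $\al$ acts position-dependently (for $n$ even it negates the quads of $(A;B)$ and $(C;D)$ at even positions, and for $n$ odd its effect includes a column swap with signs). I would first handle the case $g\in\tilde{H}$ by walking through Definition \ref{KanFor} in order. Condition (i), $p_1=3'$, restricts the $H_1$-factor to the stabiliser of the Golay quad $3'$; condition (v) does the analogous job for the $\tilde{H}_2$-factor via $q_1$. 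Since $\tilde{H}$ preserves symmetry type, conditions (ii), (iv), (vi), (vii) then identify the first symmetric and first skew quad in each pair as specific BS-quads, eliminating further generators; the tiebreakers (viii) and (ix) --- the latter pinning down whether the new elementary transformation (T4), i.e.\ $\theta$, was applied --- collapse the remaining $\tilde{H}$-degrees of freedom to the identity on $S$. This step largely parallels, read in reverse, the case distinctions already carried out in the proof of Proposition \ref{Klase}.

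The harder part, which I see as the main obstacle, is ruling out the coset $g\in\al\tilde{H}$. If $g=\al h$, then since $\al$ fixes the quad at position $1$ (which is odd), condition (i) still forces $h$ to stabilise $3'$, and the analysis above pins $h$ down up to elements whose action could a priori cancel the position-dependent sign introduced by $\al$ at later positions. The strategy is to show that this cancellation is impossible outside a short list of configurations, and that exactly those configurations are covered by the conditional conditions (iii), (x), (xi). Concretely: if any symmetric quad occurs among $p_2,\ldots,p_m$, then $\al h$ sends the leading such quad to one whose negation pattern is incompatible with (ii); otherwise (ii) is vacuous and (iii) takes over, restricting the central column of $(A;B)$ just tightly enough that $\al h$ cannot land in canonical form unless it fixes $S$. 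The analogous argument with (vi)--(ix) on the $(C;D)$ side is controlled by (x) and (xi) in the otherwise uncovered cases. Splitting all of this by the parity of $n$ (and, for $n$ even, also by the parity of $m$) and bookkeeping the precise stabiliser subgroups in each subcase is the technical core of the proof; it is where $\al$'s loss of quad-wise structure makes the argument delicate.
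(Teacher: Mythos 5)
Your overall skeleton (existence from Proposition \ref{Klase}, uniqueness via Proposition \ref{Orbite}, the coset decomposition $\Gr=\tilde{H}\sqcup\al\tilde{H}$, and stabilizer bookkeeping inside $\tilde{H}$ using the leading symmetric/skew quads and the tiebreakers) is the same as the paper's, and your treatment of the coset $\tilde{H}$ is essentially the paper's argument. The genuine gap is in the part you yourself call the technical core: ruling out $g=\al h$. Your argument there rests on an incorrect description of how $\al$ acts on quads. The first quad of $(A;B)$ pairs positions $1$ and $n+1$; for $n$ odd these have opposite parities, so $\al$ does \emph{not} fix this quad: it sends $3'$ to $2'$ (one column gets negated). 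Consequently condition (i) does not force $h$ to stabilize $3'$ when $g=\al h$: for instance an $h$ involving $\sig_1$ sends $3'$ to $2'$, and $\al$ sends $2'$ back to $3'$, so $\al h$ can preserve condition (i) with $h$ not stabilizing $3'$. Symmetrically, for $n$ even $\al$ does not negate the quads of $(C;D)$ at even positions; since the quads of $(C;D)$ pair positions $i$ and $n+1-i$ of opposite parity, $\al$ flips the symmetry type of \emph{every} quad of $(C;D)$ --- and this fact, combined with condition (v) ($q_1=1$, symmetric) and the symmetry-preservation of $\tilde{H}$, is exactly how the paper excludes the coset $\al\tilde{H}$ when $n$ is even.

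Your proposed detection mechanism through (ii)/(iii) also cannot work as stated. Wherever $\al$ acts by position-wise negation of whole quads (on $(A;B)$ for $n$ even, on $(C;D)$ for $n$ odd), negation preserves symmetry type and preserves the sets $\{1,8\}$ and $\{3,6\}$, so conditions (ii) and (iv) are blind to it; and where $\al$ flips symmetry types (on $(A;B)$ for $n$ odd), comparing leading symmetric quads only yields $p_2^{(1)}=1$ and $p_2^{(2)}=6$ (or vice versa), which is not yet a contradiction. What is missing is the paper's key step for $n$ odd: first, $q_1^{(1)}=q_1^{(2)}$, because $\al$ \emph{does} fix the first quad of $(C;D)$ when $n$ is odd (both its positions are odd); second, the autocorrelation identity (\ref{KorNula}) evaluated at $i=n-1$, which forces $p_2^{(k)}\ne q_1^{(k)}$ within $\{1,6\}$ and hence $p_2^{(1)}=p_2^{(2)}$; only then does the fact that $h$ preserves, while $\al$ alters, the symmetry type of $p_2$ exclude $s=1$. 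Nothing in your proposal plays the role of this autocorrelation argument, and conditions (iii), (x), (xi) cannot substitute for it: in the paper they are used afterwards, inside the coset $\tilde{H}$, to pin down the central columns, not to exclude $\al$.
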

\begin{proof}
In view of Proposition \ref{Klase}, we just have to prove the 
uniqueness assertion. Let
$$
S^{(k)}=(A^{(k)};B^{(k)};C^{(k)};D^{(k)})\in\pE,\quad (k=1,2)
$$
be in the canonical form. We have to prove that in fact 
$S^{(1)}=S^{(2)}$.

By Proposition \ref{Orbite}, we have $gS^{(1)}=S^{(2)}$ for some $g\in\Gr$. We can write $g$ as $g=\al^s h$ where $s\in\{0,1\}$ and $h\in\tilde{H}$. Let $p_1^{(k)}p_2^{(k)}\ldots p_{m+1}^{(k)}$ be the encoding of the pair $(A^{(k)};B^{(k)})$ and $q_1^{(k)}q_2^{(k)}\ldots$ the encoding of the pair $(C^{(k)};D^{(k)})$. The symbols (i-xi)
will refer to the corresponding conditions of Definition \ref{KanFor}. Observe that $p_1^{(1)}=p_1^{(2)}=3'$ by (i).

We prove first preliminary claims (a-c). 

(a): $q_1^{(1)}=q_1^{(2)}$. 

For $n$ even see (v). Let $n$ be odd. 
When we apply the generator $\al$ to any $S\in BS(n+1,n)$, we do not change the first quad of $(C;D)$. 
It follows that the quads $q_1^{(1)}$ and $q_1^{(2)}$ have the same 
symmetry type. The claim now follows from (v).

(b): $g\in\tilde{H}$, i.e., $s=0$. 

Assume first that $n$ is even. By (v), $q_1^{(1)}=q_1^{(2)}=1$. 
For any $S\in\pE$ the first quad of
$(C;D)$ in $S$ and in $\al S$ have different symmetry types.
As the quad $h(1)$ is symmetric, the equality 
$\al^s hS^{(1)}=S^{(2)}$ forces $s$ to be 0. 
Assume now that $n$ is odd. Then for any $S\in\pE$ the second quad of
$(A;B)$ in $S$ and in $\al S$ have different symmetry types.
Recall that $q_1^{(1)}=q_1^{(2)}\in\{1,6\}$ and that (see (i)) 
$p_2^{(1)}$ and $p_2^{(2)}$ belong to $\{1,6\}$. 
From (\ref{KorNula}), with $i=n-1$, we deduce that 
$p_2^{(k)}\ne q_1^{(k)}$ for $k=1,2$. 
We conclude that $p_2^{(1)}=p_2^{(2)}$. The claim now follows from the fact that $h$ preserves while $\al$ alters the symmetry type of the quad $p_2$.

As an immediate consequence of (b), we point out that
a quad $p_i^{(1)}$ is symmetric iff $p_i^{(2)}$ is,
and the same is true for the quads $q_i^{(1)}$ and $q_i^{(2)}$.

(c): $p_2^{(1)}=p_2^{(2)}$. 

This was already proved above in the case when $n$ is odd. In general, the claim follows from (b) and the equality 
$h(p_2^{(1)})=p_2^{(2)}$.
Observe that each of the sets $\{6,8\}$ and $\{1,6\}$ consists 
of one symmetric and one skew quad and that $h$ preserves the
symmetry type of quads.

Recall that $\tilde{H}=H_1\times \tilde{H}_2$. Since $s=0$ we have $g=h=h_1h_2$ with $h_1\in H_1$ and $h_2\in \tilde{H}_2$.
Consequently, $h_1(p_i^{(1)})=p_i^{(2)}$ and
$h_2(q_i^{(1)})=q_i^{(2)}$ for all $i$s.

We shall now prove that $A^{(1)}=A^{(2)}$ and $B^{(1)}=B^{(2)}$. 
Since $p_1^{(1)}=p_1^{(2)}=3'$, the equality
$h_1(p_1^{(1)})=p_1^{(2)}$ implies that $h_1(3')=3'$. Thus 
$h_1=\rho_1^e (\nu_2 \rho_2)^f$ for some $e,f\in\{0,1\}$.

Assume first that $p_2^{(1)}$ is symmetric. By (ii), 
$p_2^{(1)}\in\{1,8\}$. Then $h_1(p_2^{(1)})=p_2^{(2)}=p_2^{(1)}$ implies that $f=0$. Hence, $h_1=\rho_1^e$ and so $B^{(1)}=B^{(2)}$.
If all quads $p_i^{(1)}$, $i\ne1$, are symmetric then
also $A^{(2)}=h_1A^{(1)}=A^{(1)}$. Otherwise let $i$ be the least index for which the quad $p_i^{(1)}$ is skew. Since $B^{(1)}=B^{(2)}$
and $p_i^{(1)}$ is 3 or 6 (see (iv)), we infer
that $e=0$. Hence $h_1=1$ and so $A^{(1)}=A^{(2)}$. 

Now assume that $p_2^{(1)}$ is skew. By (ii), 
$p_2^{(1)}=6$. Then $h_1(p_2^{(1)})=p_2^{(2)}=p_2^{(1)}$ implies that $e=0$. Thus $h_1=(\nu_2 \rho_2)^f$ and so $A^{(1)}=A^{(2)}$.
If all quads $p_i^{(1)}$, $i\ne1$, are skew then by invoking 
the condition (iii) we deduce that $f=0$ and so $B^{(1)}=B^{(2)}$. 
Otherwise let $i$ be the least index for 
which the quad $p_i^{(1)}$ is symmetric. Since $A^{(1)}=A^{(2)}$
and $p_i^{(1)}$ is 1 or 8 (see (ii)), we infer
that $f=0$. Hence $h_1=1$ and so $B^{(1)}=B^{(2)}$. 

It remains to prove that $C^{(1)}=C^{(2)}$ and $D^{(1)}=D^{(2)}$. 
We set $Q=\{q_i^{(1)}:1\le i\le m\}$.
By (v) and the claim (a) we have $q_1^{(1)}=q_1^{(2)}\in\{1,6\}$.

We first consider the case $q_1^{(1)}=q_1^{(2)}=6$ which occurs only
for $n$ odd. Then $h_2(6)=6$ and so 
$h_2\in\langle\nu_3\rho_3,\nu_4\rho_4,\theta,\sig_2\rangle$.
It follows that $h_2(3)=3$.

If some $q\in Q$ is symmetric, let $i$ be the least
index such that $q_i^{(1)}$ is symmetric. Then (vi) implies that
$q_i^{(1)}=q_i^{(2)}=1$. Thus $h_2$ must fix the quad 1. As the stabilizer of the quad 1 in 
$\langle\nu_3\rho_3,\nu_4\rho_4,\theta,\sig_2\rangle$
is $\langle\theta,\sigma_2\rangle$, we infer that $h_2$ must also fix the quad 8. Similarly, if $2\in Q$ then (viii) implies that $h_2$ fixes 2 and 7. If $4\in Q$ then (ix) implies that $h_2$ fixes 4 and 5. These facts imply that $h_2$ fixes all quads in $Q$, i.e., $q_i^{(1)}=q_i^{(2)}$ for all $i\le m$. It remains to show that, for odd $n$, $q_{m+1}^{(1)}=q_{m+1}^{(2)}$. If $Q\subseteq\{3,4,5,6\}$, this follows from (xi). Otherwise $Q$ contains a symmetric quad and so $h_2\in\langle\theta,\sigma_2\rangle$. If $Q\not\subseteq\{1,3,6,8\}$ then $Q$ contains one of the quads 2,4,5 or 7. Since $h_2$ fixes all quads in $Q$, we infer that $h_2\in\langle\theta\rangle$, and so $q_{m+1}^{(1)}=q_{m+1}^{(2)}$. If $Q\subseteq\{1,3,6,8\}$, the equality $q_{m+1}^{(1)}=q_{m+1}^{(2)}$ follows from (x).

Finally, we consider the case $q_1^{(1)}=q_1^{(2)}=1$. Since
$h_2(q_1^{(1)})=q_1^{(2)}$, $h_2\in\langle \rho_3,\rho_4,\theta,\sig_2 \rangle$. Hence $h_2$ fixes the quads 1 and 8.

If some $q\in Q$ is skew, then (vii) implies that $h_2$ fixes the quads 3 and 6. If $2\in Q$ then (viii) implies that $h_2$ fixes the quads 2 and 7. If $4\in Q$ then (ix) implies that $h_2$ fixes the quads 4 and 5. These facts imply that $h_2$ fixes all quads in $Q$. If $n$ is odd,
then we invoke the conditions (x) and (xi) to conclude that $h_2$
also fixes the central column of $(C^{(1)};D^{(1)})$. Hence
$C^{(1)}=C^{(2)}$ and $D^{(1)}=D^{(2)}$ also in this case. 
\end{proof}

\section{Representatives of the equivalence classes} \label{Tablice}

We have computed a set of representatives for the equivalence
classes of base sequences $BS(n+1,n)$ for all $n\le30$. Due to
their excessive size, we tabulate these sets only for $n\le13$. 
Each representative is given in the canonical form
which is made compact by using our standard encoding.
The encoding is explained in detail in Section \ref{BazNiz}. 

As an example, the base sequences
\begin{eqnarray*}
A &=& +,+,+,+,-,-,+,-,+; \\
B &=& +,+,+,-,+,+,+,-,-; \\
C &=& +,+,-,-,+,-,-,+; \\
D &=& +,+,+,+,-,+,-,+; \\
\end{eqnarray*}
are encoded as $3'6142;\, 1675$. In the tables we write 0 
instead of $3'$. This convention was used in our previous 
papers on this and related topics.

This compact notation is used primarily in order to save space, 
but also to avoid introducing errors during decoding. For each 
$n$, the representatives are listed in the lexicographic order 
of the symbol sequences (\ref{simb-p}) and (\ref{simb-q}).

In Table 2 we list the codes for the representatives of the
equivalence classes of $BS(n+1,n)$ for $n\le8$. This table also 
records the values $N_X(k)$ of the nonperiodic autocorrelation 
functions for $X\in\{A,B,C,D\}$ and $k\ge0$. For instance let us consider the first item in the list of base sequences $BS(8,7)$ given in Table 2. The base sequences are encoded in the first column as 0165; 6123. The first part 0165 encodes the pair $(A;B)$, and the second part 6123 the pair $(C;D)$. The function $N_A$, at the points
$0,1,\ldots,7$, takes the values $8,-1,2,-1,0,1,2,1$ listed in the second column. Just below these values one finds the values of $N_B$  at the same points. In the third column we list likewise the values 
of $N_C$ and $N_D$ at the points $i=0,1,\ldots,6$.

Tables 3-7 contain only the list of codes for the representatives of the equivalence classes of $BS(n+1,n)$ for $9\le n\le 13$. 

Let us say that the base sequences $S=(A;B;C;D) \in BS(n+1,n)$ 
are {\em normal} respectively {\em near-normal} if
$b_i=a_i$ respectively $b_i=(-1)^{i-1}a_i$ for all $i\le n$.
We denote by $NS(n)$ respectively $NN(n)$ the set of all
normal respectively near-normal sequences in $BS(n+1,n)$.
Let us say also that an equivalence class $\pE\subseteq BS(n+1,n)$
is {\em normal} respectively {\em near-normal} if
$\pE \cap NS(n)$ respectively $\pE \cap NN(n)$ is not void.
Our canonical form has been designed so that if $\pE$ is normal
then its canonical representative $S$ belongs to $NS(n)$. 
The analogous statement for near-normal classes is false.
It is not hard to recognize which representatives $S$ in
our tables are normal sequences. Let (\ref{simb-p}) be the
encoding of the pair $(A;B)$. Then $S\in NS(n)$ iff all the
quads $p_i$, $i\ne1$, belong to $\{1,3,6,8\}$ and, in the case
when $n=2m$ is even, the central column symbol $p_{m+1}$ is 0 or 3.

It is an interesting question to find the necessary and sufficient
conditions for two binary sequences to have the same norm. The group of order four generated by the negation and reversal operations
acts on binary sequences. We say that two binary sequences are
{\em equivalent} if they belong to the same orbit.
Note that the equivalent binary sequences have the same norm.
However the converse is false. Here is a counter-example
which occurs in Table 2 for the case $n=8$. The base sequences 
15 and 16 differ only in their first sequences, which we denote
here by $U$ and $V$ respectively:
\begin{eqnarray*}
U &=& +\,+\,-\,-\,-\,+\,-\,-\,+ \, , \\
V &=& +\,+\,-\,+\,+\,-\,-\,-\,+ \, .
\end{eqnarray*}
Their associated polynomials are
\begin{eqnarray*}
U(x) &=& 1+x-x^2-x^3-x^4+x^5-x^6-x^7+x^8 \, , \\
V(x) &=& 1+x-x^2+x^3+x^4-x^5-x^6-x^7+x^8.
\end{eqnarray*}
It is obvious that $U$ and $V$ are not equivalent in the above sense. On the other hand, from the factorizations $U(x)=p(x)q(x)$ and 
$V(x)=p(x)r(x)$, where $p(x)=1+x-x^2$, $q(x)=1-x^3-x^6$ and
$r(x)=1+x^3-x^6=-x^6 q(x^{-1})$, we deduce immediately that 
$N_U(x)=N_V(x)$. 

This counter-example can be easily generalized. Let us define
{\em binary polynomials} as polynomials associated to binary
sequences. If $f(x)$ is a polynomial of degree $d$ with
$f(0)\ne0$, we define its {\em dual} polynomial $f^*$ by 
$f^*(x)=x^d f(x^{-1})$. Then for any positive integer $k$
we have $f^*(x^k)=f(x^k)^*$, i.e., $f^*(x^k)=g^*(x)$ where
$g^*$ is the dual of the polynomial $f(x^k)$.
In general we can start with any number of binary sequences, but 
here we take only three of them: $A;\,B;\,C$ of lengths $m,n,k$
respectively. From the associated binary polynomials
$A(x),B(x),C(x)$ we can form several binary polynomials of
degree $mnk-1$. The basic one is $A(x)B(x^m)C(x^{mn})$.
The other are obtained from this one by replacing one or
more of the three factors by their duals.
It is immediate that the binary sequences corresponding
to these binary polynomials all have the same norm.
In general many of these sequences will not be equivalent.
However note that if we replace all three factors
with their duals, we will obtain a binary sequence equivalent
to the basic one.

\section{Acknowledgments}

The author has pleasure to thank the three anonymous referees for their valuable comments. The author is grateful to NSERC for the continuing support of his research. This work was made possible by the facilities 
of the Shared Hierarchical Academic Research Computing Network 
(SHARCNET:www.sharcnet.ca) and Compute/Calcul Canada.

\newpage

\begin{center}
\begin{tabular}{rlll}
\multicolumn{4}{c}{Table 2: Equivalence classes of $BS(n+1,n)$,
$n\le8$} \\ \hline 
\multicolumn{1}{c}{} & \multicolumn{1}{c}{$ABCD$} & 
\multicolumn{1}{c}{$N_A$ \& $N_B$} &
\multicolumn{1}{c}{$N_C$ \& $N_D$} \\ \hline
\multicolumn{4}{c}{ $n=1$ }\\
1 & 0 &  $2,1$ & $1$ \\
 & 0 & $2,-1$ & $1$ \\
\multicolumn{4}{c}{ $n=2$ }\\
1 & 03 &  $3,-2,1$ & $2,1$ \\ 
 & 1 & $3,0,-1$ & $2,1$ \\
\multicolumn{4}{c}{ $n=3$ }\\
1 & 06 &  $4,-1,0,1$ & $3,2,1$ \\
 & 11 & $4,1,-2,-1$ & $3,-2,1$ \\
\multicolumn{4}{c}{ $n=4$ }\\
1 & 060 &  $5,0,1,0,1$ & $4,-1,0,1$ \\
 & 16 & $5,2,-1,-2,-1$ & $4,-1,0,1$ \\
2 & 082 &  $5,0,-1,-2,1$ & $4,3,2,1$ \\
 & 12 & $5,-2,1,0,-1$ & $4,-1,-2,1$ \\
3 & 083 &  $5,0,-1,-2,1$ & $4,-1,0,1$ \\
 & 16 & $5,2,1,0,-1$ & $4,-1,0,1$ \\
\multicolumn{4}{c}{ $n=5$ }\\
1 & 016 &  $6,1,0,1,2,1$ & $5,2,-1,-2,-1$ \\
 & 640 & $6,-1,2,-1,0,-1$ & $5,-2,-1,2,-1$ \\
2 & 017 &  $6,1,-4,-1,2,1$ & $5,-2,1,0,-1$ \\
 & 613 & $6,3,2,1,0,-1$ & $5,-2,1,0,-1$ \\
3 & 064 &  $6,1,-2,-1,0,1$ & $5,0,1,0,1$ \\
 & 160 & $6,-1,0,1,-2,-1$ & $5,0,1,0,1$ \\
4 & 065 &  $6,-3,2,-1,0,1$ & $5,0,-1,2,1$  \\
 & 113 & $6,3,0,-3,-2,-1$ & $5,0,-1,2,1$ \\
\multicolumn{4}{c}{ $n=6$ }\\
1 & 0612 &  $7,-2,3,-2,1,0,1$ & $6,1,-4,-1,2,1$ \\
 & 127 & $7,4,1,0,-1,-2,-1$ & $6,-3,0,3,-2,1$ \\
2 & 0820 &  $7,-2,1,0,3,-2,1$ & $6,1,0,-1,-2,1$ \\
 & 188 & $7,0,-1,2,1,0,-1$ & $6,1,0,-1,-2,1$ \\
3 & 0861 &  $7,-2,-3,2,1,-2,1$ & $6,1,2,1,0,1$ \\
 & 162 & $7,0,3,0,-1,0,-1$ & $6,1,-2,-3,0,1$ \\
4 & 0872 & $7,2,1,0,-1,-2,1$ & $6,1,0,1,2,1$ \\
 & 126 & $7,0,-1,-2,1,0,-1$ & $6,-3,0,1,-2,1$ \\ 
5 & 0882 &  $7,2,1,0,-1,-2,1$ & $6,1,-2,-1,0,1$ \\
 & 164 & $7,0,-1,2,1,0,-1$ & $6,-3,2,-1,0,1$ \\
\multicolumn{4}{c}{ $n=7$ }\\
1 & 0165 & $8,-1,2,-1,0,1,2,1$ & $7,0,-1,2,1,0,-1$ \\ 
 & 6123 & $8,1,0,1,-2,-1,0,-1$ & $7,0,-1,-2,1,0,-1$ \\ 
2 & 0165 & $8,-1,2,-1,0,1,2,1$ & $7,0,3,0,-1,0,-1$ \\ 
 & 6141 & $8,1,0,1,-2,-1,0,-1$ & $7,0,-5,0,3,0,-1$ \\ 
\hline
\end{tabular} \\
\end{center}

\begin{center}
\begin{tabular}{rlll}
\multicolumn{4}{c}{Table 2 (continued)} \\ \hline 
\multicolumn{1}{c}{} & \multicolumn{1}{c}{$ABCD$} & 
\multicolumn{1}{c}{$N_A$ \& $N_B$} &
\multicolumn{1}{c}{$N_C$ \& $N_D$} \\ \hline
\multicolumn{4}{c}{ $n=7$ }\\
3 & 0166 & $8,3,-2,-1,0,1,2,1$ & $7,0,-1,2,1,0,-1$ \\ 
 & 6122 & $8,1,0,1,-2,-1,0,-1$ & $7,-4,3,-2,1,0,-1$ \\ 
4 & 0173 & $8,-1,-2,1,-2,-1,2,1$ & $7,0,3,0,-1,0,-1$ \\ 
 & 6161 & $8,1,0,-1,4,1,0,-1$ & $7,0,-1,0,-1,0,-1$ \\ 
5 & 0173 & $8,-1,-2,1,-2,-1,2,1$ & $7,4,1,0,-1,-2,-1$ \\ 
 & 6411 & $8,1,0,-1,4,1,0,-1$ & $7,-4,1,0,-1,2,-1$ \\ 
6 & 0183 & $8,-1,-2,1,-2,-1,2,1$ & $7,4,3,2,1,0,-1$ \\ 
 & 6121 & $8,-3,0,-1,0,1,0,-1$ & $7,0,-1,-2,1,0,-1$ \\
7 & 0613 & $8,-1,0,3,0,1,0,1$ & $7,-2,3,-2,1,0,1$ \\ 
 & 1623 & $8,1,-2,1,2,-1,-2,-1$ & $7,2,-1,-2,-3,0,1$ \\ 
8 & 0614 & $8,-1,4,-1,0,1,0,1$ & $7,2,-1,0,-1,0,1$ \\ 
 & 1641 & $8,1,-2,1,2,-1,-2,-1$ & $7,-2,-1,0,-1,0,1$ \\ 
9 & 0615 & $8,-1,0,3,0,1,0,1$ & $7,2,-3,-2,1,2,1$ \\ 
 & 1263 & $8,1,2,1,-2,-1,-2,-1$ & $7,-2,1,-2,1,-2,1$ \\ 
10 & 0615 & $8,-1,0,3,0,1,0,1$ & $7,2,-3,-4,-1,2,1$ \\ 
 & 1272 & $8,1,2,1,-2,-1,-2,-1$ & $7,-2,1,0,3,-2,1$ \\ 
11 & 0616 & $8,-1,4,-1,0,1,0,1$ & $7,2,-3,-2,1,2,1$ \\ 
 & 1262 & $8,1,2,1,-2,-1,-2,-1$ & $7,-2,-3,2,1,-2,1$ \\ 
12 & 0618 & $8,-1,0,-1,-2,1,0,1$ & $7,2,1,2,1,2,1$ \\ 
 & 1261 & $8,1,-2,1,0,-1,-2,-1$ & $7,-2,1,-2,1,-2,1$ \\ 
13 & 0635 & $8,-1,-4,1,2,-1,0,1$ & $7,2,3,2,1,0,1$ \\ 
 & 1621 & $8,-3,2,-1,0,1,-2,-1$ & $7,2,-1,-2,-3,0,1$ \\ 
14 & 0638 & $8,-1,0,-3,0,-1,0,1$ & $7,2,3,2,1,0,1$ \\ 
 & 1620 & $8,1,-2,-1,2,1,-2,-1$ & $7,-2,-1,2,-3,0,1$ \\ 
15 & 0641 & $8,3,0,1,0,-1,0,1$ & $7,-2,3,-2,1,0,1$ \\ 
 & 1622 & $8,1,-2,-1,2,1,-2,-1$ & $7,-2,-1,2,-3,0,1$ \\ 
16 & 0646 & $8,3,0,-3,-2,-1,0,1$ & $7,2,1,0,3,2,1$ \\ 
 & 1222 & $8,-3,2,-1,0,1,-2,-1$ & $7,-2,-3,4,-1,-2,1$ \\ 
17 & 0646 & $8,3,0,-3,-2,-1,0,1$ & $7,2,1,2,1,2,1$ \\ 
 & 1260 & $8,-3,2,-1,0,1,-2,-1$ & $7,-2,-3,2,1,-2,1$ \\
\multicolumn{4}{c}{ $n=8$ }\\
1 & 06113 & $9,0,1,4,-1,2,1,0,1$ & $8,-1,0,-3,0,-1,0,1$ \\
 & 1638 & $9,2,-1,2,1,0,-1,-2,-1$ & $8,-1,0,-3,0,-1,0,1$ \\
2 & 06122 & $9,0,1,4,-1,2,1,0,1$ & $8,3,0,-3,-2,-1,0,1$ \\
 & 1644 & $9,-2,3,-2,1,0,-1,-2,-1$ & $8,-1,-4,1,2,-1,0,1$ \\
3 & 06141 & $9,0,5,0,1,0,1,0,1$ & $8,-1,0,1,-2,-1,0,1$ \\
 & 1663 & $9,2,-5,-2,3,2,-1,-2,-1$ & $8,-1,0,1,-2,-1,0,1$ \\
4 & 06142 & $9,0,1,0,-3,0,1,0,1$ & $8,-1,4,-1,0,1,0,1$ \\
 & 1624 & $9,2,-1,-2,3,2,-1,-2,-1$ & $8,-1,-4,3,0,-3,0,1$ \\
\hline
\end{tabular} \\
\end{center}

\begin{center}
\begin{tabular}{rlll}
\multicolumn{4}{c}{Table 2 (continued)} \\ \hline 
\multicolumn{1}{c}{} & \multicolumn{1}{c}{$ABCD$} & 
\multicolumn{1}{c}{$N_A$ \& $N_B$} &
\multicolumn{1}{c}{$N_C$ \& $N_D$} \\ \hline
\multicolumn{4}{c}{ $n=8$ }\\
5 & 06151 & $9,0,1,0,5,0,1,0,1$ & $8,-1,0,-1,-2,1,0,1$ \\
 & 1618 & $9,2,-1,2,-1,-2,-1,-2,-1$ & $8,-1,0,-1,-2,1,0,1$ \\
6 & 06152 & $9,0,-3,0,1,0,1,0,1$ & $8,3,-2,-1,0,1,2,1$ \\
 & 1264 & $9,2,3,2,-1,-2,-1,-2,-1$ & $8,-5,2,-1,0,1,-2,1$ \\
7 & 06183 & $9,0,1,-4,-1,-2,1,0,1$ & $8,-1,-2,5,0,-1,2,1$ \\
 & 1271 & $9,2,-1,-2,1,0,-1,-2,-1$ & $8,-1,2,1,0,3,-2,1$ \\
8 & 06183 & $9,0,1,-4,-1,-2,1,0,1$ & $8,-1,0,3,0,1,0,1$ \\
 & 1613 & $9,2,-1,-2,1,0,-1,-2,-1$ & $8,-1,0,3,0,1,0,1$ \\
9 & 06310 & $9,0,1,2,1,4,-1,0,1$ & $8,-1,0-1,0,-3,0,1$ \\
 & 1686 & $9,2,-1,0,-1,2,1,-2,-1$ & $8,-1,0-1,0,-3,0,1$ \\
10 & 06380 & $9,-4,1,-2,1,0,-1,0,1$ & $8,3,0,1,0,-1,0,1$ \\
& 1661 & $9,-2,-1,0,-1,2,1,-2,-1$ & $8,3,0,1,0,-1,0,1$ \\
11 & 06382 & $9,0,1,-2,-3,0,-1,0,1$ & $8,3,0,1,0,-1,0,1$ \\
& 1641 & $9,-2,-1,0,-1,2,1,-2,-1$ & $8,-1,0,1,4,-1,0,1$ \\
12 & 06412 & $9,0,1,2,-3,0,-1,0,1$ & $8,-1,0,1,4,-1,0,1$ \\
& 1632 & $9,2,-1,0,-1,2,1,-2,-1$ & $8,-1,0,-3,0,-1,0,1$ \\
13 & 06580 & $9,-4,1,-2,1,0,-1,0,1$ & $8,3,-2,-3,0,3,2,1$ \\
& 1127 & $9,2,3,0,-1,-2,-3,-2,-1$ & $8,-1,-2,5,0,-1,2,1$ \\
14 & 06633 & $9,0,-3,2,-1,-2,-1,0,1$ & $8,-1,2,-1,0,1,2,1$ \\
& 1163 & $9,2,-1,0,1,0,-3,-2,-1$ & $8,-1,2,-1,0,1,2,1$ \\
15 & 06852 & $9,0,-3,0,1,0,-3,0,1$ & $8,-1,2,-1,0,1,2,1$ \\
 & 1163 & $9,2,-1,2,-1,-2,-1,-2,-1$ & $8,-1,2,-1,0,1,2,1$ \\
16 & 06860 & $9,0,-3,0,1,0,-3,0,1$ & $8,-1,2,-1,0,1,2,1$ \\
 & 1163 & $9,2,-1,2,-1,-2,-1,-2,-1$ & $8,-1,2,-1,0,1,2,1$ \\
17 & 08110 & $9,0,3,2,1,0,3,-2,1$ & $8,-1,-2,-1,0,1,-2,1$ \\
 & 1866 & $9,2,1,0,-1,-2,1,0,-1$ & $8,-1,-2,-1,0,1,-2,1$ \\
18 & 08350 & $9,0,-1,-4,1,0,1,-2,1$ & $8,-1,2,1,0,3,-2,1$ \\
 & 1822 & $9,-2,-3,2,-1,-2,3,0,-1$ & $8,3,2,1,0,-1,-2,1$ \\
19 & 08383 & $9,0,3,0,-1,-2,1,-2,1$ & $8,-1,-2,-1,0,1,-2,1$ \\
 & 1866 & $9,2,1,2,1,0,3,0,-1$ & $8,-1,-2,-1,0,1,-2,1$ \\
20 & 08630 & $9,-4,-1,0,1,0,1,-2,1$ & $8,3,0,1,0,-1,0,1$ \\
 & 1661 & $9,-2,1,-2,-1,2,-1,0,-1$ & $8,3,0,1,0,-1,0,1$ \\
21 & 08640 & $9,0,-1,-4,1,0,1,-2,1$ & $8,-1,-2,5,0,-1,2,1$ \\
 & 1282 & $9,-2,1,-2,-1,2,-1,0,-1$ & $8,3,2,1,0,-1,-2,1$ \\
22 & 08642 & $9,0,-1,0,-3,0,1,-2,1$ & $8,3,0,1,0,-1,0,1$ \\
 & 1641 & $9,-2,1,-2,-1,2,-1,0,-1$ & $8,-1,0,1,4,-1,0,1$ \\
\hline
\end{tabular} \\
\end{center}

\begin{center}
\begin{tabular}{rlll}
\multicolumn{4}{c}{Table 2 (continued)} \\ \hline 
\multicolumn{1}{c}{} & \multicolumn{1}{c}{$ABCD$} & 
\multicolumn{1}{c}{$N_A$ \& $N_B$} &
\multicolumn{1}{c}{$N_C$ \& $N_D$} \\ \hline
\multicolumn{4}{c}{ $n=8$ }\\
23 & 08660 & $9,0,-1,-4,1,0,1,-2,1$ & $8,-1,-2,5,0,-1,2,1$ \\
 & 1271 & $9,2,1,-2,-1,-2,-1,0,-1$ & $8,-1,2,1,0,3,-2,1$ \\
24 & 08660 & $9,0,-1,-4,1,0,1,-2,1$ & $8,-1,0,3,0,1,0,1$ \\
 & 1613 & $9,2,1,-2,-1,-2,-1,0,-1$ & $8,-1,0,3,0,1,0,1$ \\
25 & 08833 & $9,0,-1,2,-1,2,-1,-2,1$ & $8,-1,0,-1,0,-3,0,1$ \\
 & 1686 & $9,2,1,0,1,4,1,0,-1$ & $8,-1,0,-1,0,-3,0,1$ \\
26 & 08862 & $9,0,-1,2,-1,2,-1,-2,1$ & $8,-1,4,-1,0,1,0,1$ \\
 & 1626 & $9,2,-3,0,1,0,1,0,-1$ & $8,-1,0,-1,0,-3,0,1$ \\
27 & 08863 & $9,0,-1,2,-1,2,-1,-2,1$ & $8,-1,0,-3,0,-1,0,1$ \\
 & 1638 & $9,2,1,4,1,0,1,0,-1$ & $8,-1,0,-3,0,-1,0,1$ \\
% 13 &  & $9,$ & $8,$ \\
%  &  & $9,$ & $8,$ \\
\hline \\
\end{tabular} \\
\end{center}

\begin{center}
\begin{tabular}{rl|rl|rl|rl}
\multicolumn{8}{c}{Table 3: Equivalence classes of $BS(10,9)$} \\ \hline 
\multicolumn{1}{c}{} & \multicolumn{1}{c}{$AB$ \quad $CD$} & 
\multicolumn{1}{c}{} & \multicolumn{1}{c}{$AB$ \quad $CD$} & 
\multicolumn{1}{c}{} & \multicolumn{1}{c}{$AB$ \quad $CD$} & 
\multicolumn{1}{c}{} & \multicolumn{1}{c}{$AB$ \quad $CD$} \\ \hline
1 & 01235 66450 & 2 & 01324 66181 & 3 & 01618 64150 & 4 & 01624 64183 \\
5 & 01627 64130 & 6 & 01633 64140 & 7 & 01642 64560 & 8 & 01652 61453 \\
9 & 01652 64313 & 10 & 01654 61163 & 11 & 01655 61180 & 12 & 01672 61281 \\
13 & 01675 61430 & 14 & 01682 61180 & 15 & 01684 61122 & 16 & 01734 64160 \\
17 & 01735 61640 & 18 & 01764 61821 & 19 & 01765 61281 & 20 & 01767 61831 \\
21 & 01783 61411 & 22 & 01867 61311 & 23 & 06124 16282 & 24 & 06136 16640 \\
25 & 06147 16450 & 26 & 06152 12763 & 27 & 06164 16133 & 28 & 06172 12681 \\
29 & 06175 12670 & 30 & 06175 16143 & 31 & 06187 16131 & 32 & 06351 16460 \\
33 & 06382 16460 & 34 & 06388 16340 & 35 & 06412 16273 & 36 & 06412 16381 \\
37 & 06413 16460 & 38 & 06451 16163 & 39 & 06458 12612 & 40 & 06481 12623 \\
41 & 06481 16161 & 42 & 06581 11622 & 43 & 06583 11631 & 44 & 06875 11622 \\
\hline
\end{tabular} \\
\end{center}

\newpage

\begin{center}
\begin{tabular}{rl|rl|rl}
\multicolumn{6}{c}{Table 4: Equivalence classes of $BS(11,10)$} \\ \hline 
\multicolumn{1}{c}{} & \multicolumn{1}{c}{$AB$ \quad $CD$} & 
\multicolumn{1}{c}{} & \multicolumn{1}{c}{$AB$ \quad $CD$} & 
\multicolumn{1}{c}{} & \multicolumn{1}{c}{$AB$ \quad $CD$} \\ \hline
1 & 061173 16456 & 2 & 061253 16246 & 3 & 061350 16645 \\
4 & 061450 16267 & 5 & 061450 16443 & 6 & 061460 16434 \\
7 & 061463 12826 & 8 & 061463 16271 & 9 & 061553 12716 \\
10 & 061563 16134 & 11 & 061582 12631 & 12 & 061633 12671 \\
13 & 061740 12684 & 14 & 061870 12286 & 15 & 061870 16144 \\
16 & 063140 16862 & 17 & 063413 16822 & 18 & 063510 16382 \\
19 & 063513 16441 & 20 & 063550 16414 & 21 & 063583 16341 \\
22 & 063810 16616 & 23 & 063821 16445 & 24 & 063833 16613 \\
25 & 063840 16262 & 26 & 063842 16242 & 27 & 063870 16322 \\
28 & 063873 16217 & 29 & 063881 16342 & 30 & 064122 16277 \\
31 & 064130 16465 & 32 & 064141 16423 & 33 & 064141 16452 \\
34 & 064170 16344 & 35 & 064313 16282 & 36 & 064413 12826 \\
37 & 064413 16271 & 38 & 064480 16213 & 39 & 064480 16321 \\
40 & 064510 12638 & 41 & 064510 12676 & 42 & 064870 12616 \\
43 & 065843 11276 & 44 & 068110 11863 & 45 & 068383 11863 \\
46 & 068560 11263 & 47 & 068571 11632 & 48 & 068580 11627 \\
49 & 068580 11643 & 50 & 068611 11634 & 51 & 068632 11632 \\
52 & 068641 11634 & 53 & 068752 11276 & 54 & 068771 11645 \\
55 & 082661 18642 & 56 & 083110 18863 & 57 & 083383 18863 \\
58 & 083510 18226 & 59 & 083521 18642 & 60 & 083850 18622 \\
61 & 086231 16248 & 62 & 086243 16277 & 63 & 086263 16332 \\
64 & 086310 16613 & 65 & 086333 16616 & 66 & 086343 16228 \\
67 & 086421 16427 & 68 & 086432 16242 & 69 & 086463 16217 \\
70 & 086473 16217 & 71 & 086483 16344 & 72 & 086532 16142 \\
73 & 086640 12631 & 74 & 086643 16134 & 75 & 086740 12642 \\
76 & 086840 12682 & 77 & 086860 12671 & 78 & 086870 12671 \\
79 & 087110 12863 & 80 & 087110 16273 & 81 & 087120 16461 \\
82 & 087130 16461 & 83 & 087131 16262 & 84 & 087221 16284 \\
85 & 087323 16282 & 86 & 087343 16282 & 87 & 087361 16422 \\
88 & 087372 12864 & 89 & 087383 16382 & 90 & 087663 16146 \\
91 & 087683 12637 & 92 & 087732 12684 & 93 & 088651 16264 \\
94 & 088651 16424 & 95 & 088673 16434 & 96 & 088762 16246 \\
97 & 088771 16264 & 98 & 088771 16424 & & \\
\hline
\end{tabular} \\
\end{center}

\begin{center}
\begin{tabular}{rl|rl|rl}
\multicolumn{6}{c}{Table 5: Equivalence classes of $BS(12,11)$} \\ \hline 
\multicolumn{1}{c}{} & \multicolumn{1}{c}{$AB$ \quad $CD$} & 
\multicolumn{1}{c}{} & \multicolumn{1}{c}{$AB$ \quad $CD$} & 
\multicolumn{1}{c}{} & \multicolumn{1}{c}{$AB$ \quad $CD$} \\ \hline
1 & 011823 661422 & 2 & 012356 661422 & 3 & 013682 663120 \\
4 & 013753 663120 & 5 & 016426 641272 & 6 & 016445 616230 \\
7 & 016472 645121 & 8 & 016525 614232 & 9 & 016525 643511 \\
10 & 016535 612770 & 11 & 016542 612463 & 12 & 016542 612843 \\
13 & 016542 614242 & 14 & 016546 612440 & 15 & 016572 614320 \\
16 & 016634 614123 & 17 & 016634 614231 & 18 & 016643 612271 \\
19 & 016653 612313 & 20 & 016724 643121 & 21 & 016727 612440 \\
22 & 016756 611280 & 23 & 016774 612363 & 24 & 016817 614320 \\
25 & 017262 641243 & 26 & 017356 616123 & 27 & 017374 616242 \\
28 & 017375 641243 & 29 & 017632 614620 & 30 & 017646 612640 \\
31 & 017664 614520 & 32 & 017674 612462 & 33 & 017674 614272 \\
34 & 018265 612772 & 35 & 018342 614620 & 36 & 018382 612770 \\
37 & 018767 613123 & 38 & 018767 613230 & 39 & 061186 164231 \\
40 & 061246 166323 & 41 & 061256 164341 & 42 & 061264 162273 \\
43 & 061264 162433 & 44 & 061284 164231 & 45 & 061462 162452 \\
46 & 061462 164322 & 47 & 061463 162262 & 48 & 061472 128681 \\
49 & 061473 164251 & 50 & 061476 162431 & 51 & 061476 164320 \\
52 & 061547 127621 & 53 & 061575 126232 & 54 & 061618 126232 \\
55 & 061624 126332 & 56 & 061644 126341 & 57 & 061764 126242 \\
58 & 061774 126343 & 59 & 063144 168281 & 60 & 063412 168481 \\
61 & 063512 162642 & 62 & 063515 162441 & 63 & 063515 164321 \\
64 & 063541 162660 & 65 & 063541 164420 & 66 & 063551 163241 \\
67 & 063811 164261 & 68 & 063824 162660 & 69 & 063824 164420 \\
70 & 063825 162772 & 71 & 063828 162273 & 72 & 063828 162433 \\
73 & 063842 164261 & 74 & 063858 163320 & 75 & 063877 128160 \\
76 & 063882 163423 & 77 & 063884 163421 & 78 & 064143 164251 \\
79 & 064146 162431 & 80 & 064146 164320 & 81 & 064615 122632 \\
82 & 064826 126262 & 83 & 064838 126451 & 84 & 064842 126451 \\
\hline
\end{tabular} \\
\end{center}

\begin{center}
\begin{tabular}{rl|rl|rl}
\multicolumn{6}{c}{Table 6: Equivalence classes of $BS(13,12)$} \\ \hline 
\multicolumn{1}{c}{} & \multicolumn{1}{c}{$AB$ \quad $CD$} & 
\multicolumn{1}{c}{} & \multicolumn{1}{c}{$AB$ \quad $CD$} & 
\multicolumn{1}{c}{} & \multicolumn{1}{c}{$AB$ \quad $CD$} \\ \hline
1 & 0611863 164521 & 2 & 0611871 166143 & 3 & 0612360 164844 \\
4 & 0612573 164215 & 5 & 0612760 128287 & 6 & 0612760 162443 \\
7 & 0612870 162424 & 8 & 0614230 164864 & 9 & 0614533 164226 \\
10 & 0614670 162327 & 11 & 0614671 128266 & 12 & 0614830 162624 \\
13 & 0615230 126876 & 14 & 0615230 161883 & 15 & 0615643 161234 \\
16 & 0615733 126452 & 17 & 0616433 126452 & 18 & 0616533 126143 \\
19 & 0616733 126143 & 20 & 0617212 126847 & 21 & 0617220 126876 \\
22 & 0617220 161883 & 23 & 0617623 126413 & 24 & 0617651 122818 \\
25 & 0617811 126428 & 26 & 0617820 161624 & 27 & 0618833 126413 \\
28 & 0631412 168644 & 29 & 0634170 168382 & 30 & 0635120 164558 \\
31 & 0635152 164341 & 32 & 0635340 164621 & 33 & 0635440 164242 \\
34 & 0635483 162246 & 35 & 0635483 162432 & 36 & 0635513 164132 \\
37 & 0635550 163214 & 38 & 0635810 163422 & 39 & 0635810 164142 \\
40 & 0635872 162134 & 41 & 0638112 163822 & 42 & 0638121 164423 \\
43 & 0638212 164242 & 44 & 0638220 164278 & 45 & 0638241 164423 \\
46 & 0638781 162167 & 47 & 0641282 162424 & 48 & 0641363 164522 \\
49 & 0641370 164265 & 50 & 0641470 162732 & 51 & 0641471 128642 \\
52 & 0641471 164217 & 53 & 0641481 164215 & 54 & 0643513 164242 \\
55 & 0643822 164522 & 56 & 0643880 162424 & 57 & 0644833 163214 \\
58 & 0646430 126143 & 59 & 0648112 126283 & 60 & 0648212 161624 \\
61 & 0648213 161644 & 62 & 0648363 126461 & 63 & 0648382 126716 \\
64 & 0648433 126452 & 65 & 0658112 116273 & 66 & 0658363 116342 \\
67 & 0658463 112645 & 68 & 0661363 116245 & 69 & 0661453 112634 \\
70 & 0663640 116342 & 71 & 0663853 116324 & 72 & 0663880 116245 \\
73 & 0664360 116245 & 74 & 0685733 116245 & 75 & 0685860 112645 \\
76 & 0685871 112766 & 77 & 0686130 116245 & 78 & 0686230 116636 \\
79 & 0686240 116273 & 80 & 0686433 116245 & 81 & 0687623 112645 \\
82 & 0688613 116245 & 83 & 0688671 112764 & 84 & 0811283 182788 \\
85 & 0812661 182668 & 86 & 0812883 186247 & 87 & 0816621 181128 \\
88 & 0817883 181128 & 89 & 0826620 186247 & 90 & 0826782 182266 \\
91 & 0826783 182641 & 92 & 0826851 186444 & 93 & 0835382 182266 \\
94 & 0836121 186621 & 95 & 0837383 182278 & 96 & 0837383 182771 \\
97 & 0838521 186627 & 98 & 0838652 182777 & 99 & 0838673 182668 \\
100 & 0838751 182777 & 101 & 0838871 186644 & 102 & 0862230 164278 \\
103 & 0862270 164413 & 104 & 0862312 164226 & 105 & 0862322 166128 \\
106 & 0862383 162748 & 107 & 0862441 164265 & 108 & 0862483 128674 \\
109 & 0862650 163341 & 110 & 0862651 162138 & 111 & 0863220 164287 \\
112 & 0863353 162642 & 113 & 0863353 164612 & 114 & 0863482 166144 \\
115 & 0864121 164324 & 116 & 0864311 164226 & 117 & 0864343 164226 \\
118 & 0864382 164413 & 119 & 0864463 162451 & 120 & 0864781 162167 \\
\hline
\end{tabular} \\
\end{center}

\begin{center}
\begin{tabular}{rl|rl|rl}
\multicolumn{6}{c}{Table 6: (continued)} \\ \hline 
\multicolumn{1}{c}{} & \multicolumn{1}{c}{$AB$ \quad $CD$} & 
\multicolumn{1}{c}{} & \multicolumn{1}{c}{$AB$ \quad $CD$} & 
\multicolumn{1}{c}{} & \multicolumn{1}{c}{$AB$ \quad $CD$} \\ \hline
121 & 0865261 126237 & 122 & 0865310 126452 & 123 & 0865343 161642 \\
124 & 0865382 126238 & 125 & 0865382 161267 & 126 & 0866310 126413 \\
127 & 0866443 126314 & 128 & 0867740 126174 & 129 & 0868761 126238 \\
130 & 0868761 161267 & 131 & 0871130 162747 & 132 & 0871332 162624 \\
133 & 0872231 162838 & 134 & 0872833 164432 & 135 & 0873111 166361 \\
136 & 0873470 162842 & 137 & 0873470 166413 & 138 & 0873481 162862 \\
139 & 0873581 164413 & 140 & 0873583 162445 & 141 & 0873670 162248 \\
142 & 0873681 162268 & 143 & 0873711 164226 & 144 & 0873750 164413 \\
145 & 0876221 127763 & 146 & 0876510 126342 & 147 & 0876510 126451 \\
148 & 0876570 122864 & 149 & 0876570 161247 & 150 & 0876581 126238 \\
151 & 0876581 161267 & 152 & 0876612 126341 & 153 & 0876663 126314 \\
154 & 0876750 126238 & 155 & 0876750 161267 & 156 & 0877382 126876 \\
157 & 0877382 161883 & 158 & 0877871 126748 & 159 & 0878631 127778 \\
160 & 0878663 127647 & 161 & 0878861 161886 & 162 & 0878870 161884 \\
163 & 0881211 168382 & 164 & 0882762 168242 & 165 & 0883571 168242 \\
166 & 0883671 168422 & 167 & 0886261 162874 & 168 & 0886280 162867 \\
169 & 0886471 162842 & 170 & 0886471 166413 & 171 & 0886560 164215 \\
172 & 0886613 164521 & 173 & 0886671 162741 & 174 & 0886760 164215 \\
175 & 0887780 162748 &     &                &     & \\
\hline
\end{tabular} \\
\end{center}

\begin{center}
\begin{tabular}{rl|rl|rl}
\multicolumn{6}{c}{Table 7: Equivalence classes of $BS(14,13)$} \\ \hline 
\multicolumn{1}{c}{} & \multicolumn{1}{c}{$AB$ \quad $CD$} & 
\multicolumn{1}{c}{} & \multicolumn{1}{c}{$AB$ \quad $CD$} & 
\multicolumn{1}{c}{} & \multicolumn{1}{c}{$AB$ \quad $CD$} \\ \hline
1& 0116455 6616380& 2& 0116536 6645183& 3& 0116546 6645153 \\
4& 0116734 6641822& 5& 0117653 6618273& 6& 0117653 6618422 \\
7& 0117663 6618450& 8& 0118176 6618441& 9& 0118323 6618222 \\
10& 0118324 6618241& 11& 0118327 6612712& 12& 0118327 6614122 \\
13& 0118345 6614212& 14& 0123628 6645153& 15& 0123644 6641450 \\
16& 0123672 6614413& 17& 0123827 6611811& 18& 0131554 6618272 \\
19& 0131657 6618273& 20& 0131657 6618422& 21& 0131676 6612763 \\
22& 0131735 6641851& 23& 0131743 6644513& 24& 0131745 6645560 \\
25& 0131842 6618222& 26& 0132157 6641822& 27& 0132284 6614580 \\
28& 0132354 6614153& 29& 0132427 6641810& 30& 0132463 6618843 \\
31& 0133154 6612743& 32& 0133414 6618451& 33& 0133425 6611813 \\
34& 0134174 6641273& 35& 0134174 6641853& 36& 0134234 6641513 \\
37& 0134246 6645113& 38& 0134273 6612412& 39& 0134274 6612280 \\
40& 0134314 6641851& 41& 0134416 6641822& 42& 0136167 6636440 \\
43& 0136764 6634111& 44& 0137536 6631223& 45& 0138324 6631411 \\
46& 0161327 6441863& 47& 0161533 6418643& 48& 0161633 6414853 \\
49& 0161655 6456330& 50& 0161742 6418643& 51& 0161755 6412743 \\
52& 0161762 6418430& 53& 0162173 6162842& 54& 0162283 6415680 \\
55& 0162328 6418650& 56& 0162556 6451340& 57& 0162582 6451343 \\
58& 0162617 6451282& 59& 0162766 6413151& 60& 0163157 6168433 \\
61& 0163174 6168441& 62& 0163255 6164143& 63& 0163287 6162131 \\
64& 0163325 6168241& 65& 0163352 6162760& 66& 0163372 6414161 \\
67& 0163375 6412612& 68& 0163417 6414653& 69& 0163462 6412681 \\
70& 0163562 6451883& 71& 0163822 6455630& 72& 0163844 6451220 \\
73& 0164133 6456613& 74& 0164143 6168242& 75& 0164156 6456550 \\
76& 0164234 6164450& 77& 0164314 6162862& 78& 0164317 6162871 \\
79& 0164327 6412623& 80& 0164367 6164311& 81& 0164471 6418541 \\
82& 0164481 6411811& 83& 0164624 6451133& 84& 0165173 6142832 \\
85& 0165174 6142582& 86& 0165243 6141643& 87& 0165274 6122463 \\
88& 0165327 6142412& 89& 0165347 6141622& 90& 0165367 6142312 \\
91& 0165413 6144831& 92& 0165428 6142152& 93& 0165523 6143513 \\
94& 0165716 6143581& 95& 0165816 6114242& 96& 0165826 6114531 \\
97& 0165826 6123142& 98& 0166137 6128182& 99& 0166153 6127433 \\
100& 0166173 6128371& 101& 0166317 6127481& 102& 0166324 6124841 \\
103& 0166413 6127832& 104& 0166423 6141830& 105& 0166543 6112471 \\
106& 0167125 6142682& 107& 0167134 6148463& 108& 0167156 6434350 \\
109& 0167162 6127643& 110& 0167162 6142763& 111& 0167238 6128222 \\
112& 0167286 6122162& 113& 0167345 6144161& 114& 0167356 6141461 \\
115& 0167356 6183881& 116& 0167365 6144311& 117& 0167385 6116270 \\
118& 0167416 6144581& 119& 0167426 6142280& 120& 0167455 6431311 \\
\hline
\end{tabular} \\
\end{center}

\begin{center}
\begin{tabular}{rl|rl|rl}
\multicolumn{6}{c}{Table 7: (continued)} \\ \hline 
\multicolumn{1}{c}{} & \multicolumn{1}{c}{$AB$ \quad $CD$} & 
\multicolumn{1}{c}{} & \multicolumn{1}{c}{$AB$ \quad $CD$} & 
\multicolumn{1}{c}{} & \multicolumn{1}{c}{$AB$ \quad $CD$} \\ \hline
121& 0167457 6435150& 122& 0167465 6141272& 123& 0167466 6124512 \\
124& 0167583 6112740& 125& 0167584 6123240& 126& 0167817 6112763 \\
127& 0168171 6118282& 128& 0168171 6143841& 129& 0168276 6114521 \\
130& 0168286 6114531& 131& 0168286 6123142& 132& 0168425 6143411 \\
133& 0168465 6112422& 134& 0168476 6123420& 135& 0168486 6114530 \\
136& 0171655 6416273& 137& 0171656 6416851& 138& 0171831 6416481 \\
139& 0172647 6411422& 140& 0172656 6415223& 141& 0173362 6416122 \\
142& 0173413 6441863& 143& 0173474 6445160& 144& 0173521 6162781 \\
145& 0173523 6414151& 146& 0173554 6161242& 147& 0173612 6164880 \\
148& 0173612 6414863& 149& 0173658 6412511& 150& 0173744 6162760 \\
151& 0173843 6412141& 152& 0173843 6412710& 153& 0176164 6182672 \\
154& 0176421 6146273& 155& 0176421 6184643& 156& 0176424 6146511 \\
157& 0176515 6434350& 158& 0176526 6431511& 159& 0176541 6124682 \\
160& 0176542 6142280& 161& 0176554 6141272& 162& 0176557 6145313 \\
163& 0176583 6145310& 164& 0176636 6142172& 165& 0176641 6142780 \\
166& 0176744 6431413& 167& 0176756 6124831& 168& 0176834 6127172 \\
169& 0176834 6141241& 170& 0176843 6116413& 171& 0176847 6121632 \\
172& 0176865 6121730& 173& 0176873 6121742& 174& 0177663 6431413 \\
175& 0177683 6128280& 176& 0177686 6145161& 177& 0178262 6116271 \\
178& 0178283 6141231& 179& 0178325 6142411& 180& 0178346 6142420 \\
181& 0178356 6124281& 182& 0178365 6141440& 183& 0178365 6431310 \\
184& 0178377 6183422& 185& 0178384 6121641& 186& 0178683 6112422 \\
187& 0178686 6123123& 188& 0178737 6118273& 189& 0182357 6126411 \\
190& 0182652 6124842& 191& 0182655 6431311& 192& 0182657 6141222 \\
193& 0182662 6127481& 194& 0182663 6122863& 195& 0182664 6124263 \\
196& 0182665 6124712& 197& 0182668 6141522& 198& 0182761 6127773 \\
199& 0182766 6141223& 200& 0182883 6121763& 201& 0183241 6184683 \\
202& 0183262 6126473& 203& 0183277 6182231& 204& 0183521 6124681 \\
205& 0183533 6141650& 206& 0183553 6121641& 207& 0183557 6127130 \\
208& 0183624 6142283& 209& 0183644 6142281& 210& 0183657 6122481 \\
211& 0183734 6128421& 212& 0183734 6142240& 213& 0183744 6142411 \\
214& 0183753 6124411& 215& 0183757 6141232& 216& 0183767 6124233 \\
217& 0183773 6141272& 218& 0183774 6121642& 219& 0183776 6124142 \\
220& 0186553 6132311& 221& 0186652 6131422& 222& 0186774 6132312 \\
223& 0186827 6131223& 224& 0186847 6131222& 225& 0187626 6132281 \\
226& 0187667 6131422& 227& 0611364 1662871& 228& 0611453 1648653 \\
229& 0611455 1648422& 230& 0611546 1626832& 231& 0611554 1627841 \\
232& 0611645 1643851& 233& 0611654 1286740& 234& 0611686 1286320 \\
235& 0611763 1624852& 236& 0611764 1642733& 237& 0611853 1626233 \\
238& 0611853 1642523& 239& 0611874 1638220& 240& 0612286 1663480 \\
\hline
\end{tabular} \\
\end{center}

\begin{center}
\begin{tabular}{rl|rl|rl}
\multicolumn{6}{c}{Table 7: (continued)} \\ \hline 
\multicolumn{1}{c}{} & \multicolumn{1}{c}{$AB$ \quad $CD$} & 
\multicolumn{1}{c}{} & \multicolumn{1}{c}{$AB$ \quad $CD$} & 
\multicolumn{1}{c}{} & \multicolumn{1}{c}{$AB$ \quad $CD$} \\ \hline
241& 0612356 1648241& 242& 0612457 1662171& 243& 0612517 1642753 \\
244& 0612528 1638242& 245& 0612536 1287762& 246& 0612547 1624820 \\
247& 0612548 1644511& 248& 0612556 1286422& 249& 0612585 1623820 \\
250& 0612586 1286420& 251& 0612645 1286441& 252& 0612646 1624730 \\
253& 0612744 1642280& 254& 0612752 1624282& 255& 0612752 1624670 \\
256& 0612753 1644131& 257& 0612753 1661181& 258& 0612758 1624250 \\
259& 0612764 1286620& 260& 0612844 1661271& 261& 0612853 1626142 \\
262& 0613515 1663843& 263& 0613554 1628243& 264& 0613647 1628323 \\
265& 0613753 1628263& 266& 0613753 1664122& 267& 0614236 1664450 \\
268& 0614271 1663481& 269& 0614274 1662142& 270& 0614475 1628213 \\
271& 0614527 1622843& 272& 0614571 1286372& 273& 0614571 1286671 \\
274& 0614577 1643513& 275& 0614638 1286321& 276& 0614642 1286341 \\
277& 0614675 1623412& 278& 0614681 1623233& 279& 0614773 1643450 \\
280& 0614774 1643270& 281& 0614784 1624250& 282& 0614873 1643211 \\
283& 0614884 1645130& 284& 0615118 1268632& 285& 0615146 1268462 \\
286& 0615147 1268452& 287& 0615471 1276443& 288& 0615487 1268111 \\
289& 0615627 1612342& 290& 0615741 1263482& 291& 0615743 1616242 \\
292& 0615783 1228631& 293& 0615784 1271660& 294& 0615785 1614350 \\
295& 0615874 1612341& 296& 0615883 1262313& 297& 0616234 1264760 \\
298& 0616237 1264280& 299& 0616281 1228632& 300& 0616358 1267113 \\
301& 0616472 1614551& 302& 0616481 1262343& 303& 0616481 1614232 \\
304& 0616534 1261732& 305& 0616841 1611822& 306& 0617413 1618861 \\
307& 0617426 1268423& 308& 0617526 1616431& 309& 0617556 1271642 \\
310& 0617556 1612741& 311& 0617572 1614651& 312& 0617586 1271660 \\
313& 0617625 1262433& 314& 0617626 1262343& 315& 0617626 1614232 \\
316& 0617655 1613422& 317& 0617682 1613640& 318& 0617786 1271642 \\
319& 0617786 1612741& 320& 0617824 1264620& 321& 0617824 1616231 \\
322& 0617844 1616450& 323& 0617845 1266212& 324& 0617853 1612461 \\
325& 0617853 1614222& 326& 0617874 1271662& 327& 0617884 1262450 \\
328& 0618516 1262443& 329& 0618517 1262433& 330& 0618527 1612441 \\
331& 0618557 1613450& 332& 0618616 1613441& 333& 0618625 1613441 \\
334& 0618714 1612662& 335& 0618748 1228631& 336& 0618824 1264142 \\
337& 0618825 1263172& 338& 0618847 1612361& 339& 0618874 1613241 \\
340& 0618883 1613451& 341& 0631147 1686422& 342& 0631352 1686222 \\
343& 0631458 1682421& 344& 0631557 1681411& 345& 0631778 1681411 \\
346& 0634135 1686220& 347& 0634187 1682321& 348& 0635124 1644340 \\
349& 0635132 1644270& 350& 0635137 1638220& 351& 0635138 1626240 \\
352& 0635152 1624740& 353& 0635311 1645860& 354& 0635381 1626710 \\
355& 0635441 1646161& 356& 0635857 1621341& 357& 0636178 1634410 \\
358& 0636178 1641321& 359& 0636882 1631443& 360& 0638171 1644131 \\
\hline
\end{tabular} \\
\end{center}

\begin{center}
\begin{tabular}{rl|rl|rl}
\multicolumn{6}{c}{Table 7: (continued)} \\ \hline 
\multicolumn{1}{c}{} & \multicolumn{1}{c}{$AB$ \quad $CD$} & 
\multicolumn{1}{c}{} & \multicolumn{1}{c}{$AB$ \quad $CD$} & 
\multicolumn{1}{c}{} & \multicolumn{1}{c}{$AB$ \quad $CD$} \\ \hline
361& 0638171 1661181& 362& 0638177 1286420& 363& 0638188 1286620 \\
364& 0638221 1645681& 365& 0638237 1642620& 366& 0638248 1642470 \\
367& 0638281 1644141& 368& 0638285 1643440& 369& 0638372 1624821 \\
370& 0638384 1661242& 371& 0638427 1638221& 372& 0638428 1638421 \\
373& 0638457 1624420& 374& 0638487 1626132& 375& 0638522 1633822 \\
376& 0638744 1632281& 377& 0638747 1641272& 378& 0641147 1638422 \\
379& 0641163 1286372& 380& 0641163 1286671& 381& 0641264 1643430 \\
382& 0641278 1624260& 383& 0641361 1624842& 384& 0641361 1643833 \\
385& 0641451 1286372& 386& 0641451 1286671& 387& 0641457 1622453 \\
388& 0641517 1632482& 389& 0641712 1632783& 390& 0641754 1621662 \\
391& 0641867 1621370& 392& 0643113 1648422& 393& 0643412 1662713 \\
394& 0643536 1286881& 395& 0643545 1661421& 396& 0643611 1624573 \\
397& 0643814 1638221& 398& 0643836 1661222& 399& 0643842 1661271 \\
400& 0643843 1646150& 401& 0644124 1624570& 402& 0644134 1627611 \\
403& 0644145 1286240& 404& 0644381 1626122& 405& 0644813 1621263 \\
406& 0644813 1634121& 407& 0644823 1634141& 408& 0645172 1616441 \\
409& 0645387 1263821& 410& 0645651 1226343& 411& 0645857 1261243 \\
412& 0645871 1261623& 413& 0646123 1261373& 414& 0646138 1261622 \\
415& 0646387 1261233& 416& 0648121 1264760& 417& 0648175 1262282 \\
418& 0648176 1614430& 419& 0648225 1263482& 420& 0648272 1612662 \\
421& 0648275 1614460& 422& 0648276 1262282& 423& 0648376 1264620 \\
424& 0648376 1616231& 425& 0648412 1264523& 426& 0648426 1263323 \\
427& 0648472 1264141& 428& 0648481 1264161& 429& 0648586 1226480 \\
430& 0648587 1226380& 431& 0648635 1226481& 432& 0648863 1226453 \\
433& 0655416 1126471& 434& 0655416 1162182& 435& 0655417 1126371 \\
436& 0658125 1164341& 437& 0658135 1166450& 438& 0658146 1162461 \\
439& 0658147 1162451& 440& 0658173 1127621& 441& 0658275 1126760 \\
442& 0658364 1162461& 443& 0658463 1127762& 444& 0658487 1127631 \\
445& 0661274 1127641& 446& 0663582 1164522& 447& 0663614 1163441 \\
448& 0663875 1163241& 449& 0663885 1163422& 450& 0682412 1186822 \\
451& 0685236 1128263& 452& 0685424 1128623& 453& 0685536 1162740 \\
454& 0685637 1126370& 455& 0685724 1162323& 456& 0685753 1126341 \\
457& 0685827 1162742& 458& 0685846 1162762& 459& 0685863 1126432 \\
460& 0686142 1163422& 461& 0686154 1122671& 462& 0686213 1162471 \\
463& 0686215 1163322& 464& 0686253 1126452& 465& 0686273 1126760 \\
466& 0686357 1163222& 467& 0686374 1164650& 468& 0686413 1164560 \\
469& 0686424 1162731& 470& 0686451 1126341& 471& 0687515 1162451 \\
472& 0687525 1162631& 473& 0687561 1126361& 474& 0687645 1126760 \\
475& 0688763 1164561&    &                &    &                 \\ 
\hline
\end{tabular} \\
\end{center}

\end{document}